\font \sevenrm=cmr7
\font \fiverm=cmr5
\newcommand{\nc}{\newcommand}
\newtheorem{theorem}{Theorem}
\newtheorem{definition}{Definition}
\newtheorem{corollary}{Corollary}
\newtheorem{proposition}{Proposition}
\newtheorem{ex}{Example}
\newtheorem{remark}{Remark}
\nc{\comment}[1]{[[{\tt #1}]] }
\nc{\Cal}[1]{{\mathcal {#1}}}
\nc{\mop}[1]{\mathop{\hbox {\rm #1} }\nolimits}
\nc{\gmop}[1]{\mathop{\hbox {\bf #1} }\nolimits}
\def\starg{{\displaystyle\mathop{\star}\limits}_g}
\def\trianglerightg{{\displaystyle\mathop{\triangleright}\limits}_g}
\def\trianglerightv{{\displaystyle\mathop{\triangleright}\limits}_v}
\def\trianglerights{{\displaystyle\mathop{\triangleright}\limits}_s}
\def\trianglerightl{{\displaystyle\mathop{\triangleright}\limits}_l}
\def\trianglerightr{{\displaystyle\mathop{\triangleright}\limits}_r}
\def\cupv{{\displaystyle\mathop{\cup}\limits}_v}
\def\cups{{\displaystyle\mathop{\cup}\limits}_s}
\nc{\smop}[1]{\mathop{\hbox {\sevenrm #1} }\nolimits}
\nc{\ssmop}[1]{\mathop{\hbox {\fiverm #1} }\nolimits}
\nc{\mopl}[1]{\mathop{\hbox {\rm #1} }\limits}
\nc{\smopl}[1]{\mathop{\hbox {\sevenrm #1} }\limits}
\nc{\ssmopl}[1]{\mathop{\hbox {\fiverm #1} }\limits}
\nc{\frakg}{{\frak g}}
\nc{\g}[1]{{\frak {#1}}}
\def \restr#1{\mathstrut_{\textstyle |}\raise-6pt\hbox{$\scriptstyle #1$}}
\def \srestr#1{\mathstrut_{\scriptstyle |}\hbox to
-1.5pt{}\raise-4pt\hbox{$\scriptscriptstyle #1$}}
\nc{\wt}{\widetilde} \nc{\wh}{\widehat}
\nc{\redtext}[1]{\textcolor{red}{#1}}
\nc{\bluetext}[1]{\textcolor{blue}{#1}}
\nc\fleche[1]{\mathop{\hbox to #1 mm{\rightarrowfill}}\limits}
\nc{\ignore}[1]{}
\def\semi{\mathrel{\times}\kern -.85pt\joinrel\mathrel{\raise
1.4pt\hbox{${\scriptscriptstyle |}$}}}
\nc\R{{\mathbb R}}
\nc\N{{\mathbb N}}
\nc\inver{^{-1}}
\nc\point{\hbox{\bf .}}
\nc\un{\hbox{\bf 1}}
\def\graphearetenor{\,{\scalebox{0.25}{
\begin{picture}(130,6) (175,-221)
\SetWidth{3.0}
\SetColor{Black}
\Line(176,-218)(304,-218)
\end{picture}
}}\,}
\def\graphearetecurcroixun{\,{\scalebox{0.25}{
\begin{picture}(130,90) (111,-177)
\SetWidth{3.0}
\SetColor{Black}
\Photon(112,-164)(240,-164){7.5}{6}
\Line(191.998,-179.998)(160.002,-148.002)\Line(160.002,-179.998)(191.998,-148.002)
\Text(176,-212)[lb]{\Huge{\Black{$1$}}}
\end{picture}
}}\,}
\def\graphearetenorcroixzero{\,{\scalebox{0.25}{
\begin{picture}(130,34) (95,-143)
\SetWidth{3.0}
\SetColor{Black}
\Line(96,-126)(224,-126)
\Line(144.002,-110.002)(175.998,-141.998)\Line(175.998,-110.002)(144.002,-141.998)
\Text(156,-172)[lb]{\Huge{\Black{$0$}}}
\end{picture}
}}\,}
\def\graphearetenorcroixun{\,{\scalebox{0.25}{
\begin{picture}(130,34) (95,-143)
\SetWidth{3.0}
\SetColor{Black}
\Line(96,-126)(224,-126)
\Line(144.002,-110.002)(175.998,-141.998)\Line(175.998,-110.002)(144.002,-141.998)
\Text(156,-172)[lb]{\Huge{\Black{$1$}}}
\end{picture}
}}\,}
\def\graphearetecur{\,{\scalebox{0.25}{
\begin{picture}(133,20) (104,-209)
\SetWidth{3.0}
\SetColor{Black}
\Photon(105,-201)(236,-197){7.5}{7}
\end{picture}
}}\,}
\def\graphev{\,{\scalebox{0.25}{
\begin{picture}(66,34) (191,-191)
    \SetWidth{3.0}
    \SetColor{Black}
    \Line(192,-174)(224,-174)
    \Line(224,-174)(256,-158)
    \Line(224,-174)(256,-190)
\end{picture}}}\,}
\def\graphevcur{\,{\scalebox{0.15}{
\begin{picture}(160,166) (97,-122)
\SetWidth{3.0}
\SetColor{Black}
\Photon(98,-99)(195,-97){7.5}{5}
\Line(195,-99)(256,-37)
\Line(196,-100)(255,-161)
\end{picture}
}}\,}
\def\grapheexmdel{\,{\scalebox{0.25}{
\begin{picture}(162,52) (175,-209)
\SetWidth{3.0}
\SetColor{Black}
\Arc(256,-222)(16,270,630)
\Arc(256,-192)(34,-61.928,241.928)
\Line(288,-190)(336,-190)
\Line(224,-190)(176,-190)
\end{picture}
}}\,}
\def\graphecerc{\,{\scalebox{0.25}{
\begin{picture}(130,34) (191,-287)
\SetWidth{3.0}
\SetColor{Black}
\Arc(256,-270)(16,270,630)
\Line(272,-270)(320,-270)
\Line(240,-270)(192,-270)
\end{picture}
}}\,}
\def\graphecerccroizero{\,{\scalebox{0.25}{
\begin{picture}(162,63) (175,-101)
\SetWidth{3.0}
\SetColor{Black}
\Arc(256,-75)(35.777,243,603)
\Line(288,-75)(336,-75)
\Line(224,-75)(176,-75)
\Line(240.002,-122.998)(271.998,-91.002)\Line(240.002,-91.002)(271.998,-122.998)
\Text(256,-139)[lb]{\Huge{\Black{$0$}}}
\end{picture}
}}\,}
\def\graphecerccroiun{\,{\scalebox{0.25}{
\begin{picture}(162,63) (175,-101)
\SetWidth{3.0}
\SetColor{Black}
\Arc(256,-75)(35.777,243,603)
\Line(288,-75)(336,-75)
\Line(224,-75)(176,-75)
\Line(240.002,-122.998)(271.998,-91.002)\Line(240.002,-91.002)(271.998,-122.998)
\Text(256,-139)[lb]{\Huge{\Black{$1$}}}
\end{picture}
}}\,}
\def\graphedeltasansindice{\,{\scalebox{0.20}{
\begin{picture}(226,68) (111,-161)
    \SetWidth{3.0}
    \SetColor{Black}
    \Arc(224,-142)(48,180,540)
    \Line(192,-110)(192,-174)
    \Line(240,-94)(240,-190)
    \Line(272,-142)(336,-142)
    \Line(176,-142)(112,-142)
\end{picture}
}}\,}
\def\graphedeltacontratddd{\,{\scalebox{0.20}{
\begin{picture}(96,57) (336,-297)
    \SetWidth{3.0}
    \SetColor{Black}
\Arc(382.723,-358.862)(114.921,68.415,112.902)
    \Arc(379,-191.833)(131.078,-108.688,-67.576)
    \Line(354,-248)(354,-320)
    \Line(412,-249)(415,-317)
\end{picture}
  }}\,}
\def\graphedeltacontratdddd{\,{\scalebox{0.20}{
\begin{picture}(168,29) (184,-180)
    \SetWidth{3.0}
    \SetColor{Black}
    \Arc(243,-176)(22.627,135,495)
    \Arc(288,-175)(22.627,135,495)
    \Line(312,-176)(351,-176)
    \Line(221,-177)(185,-177)
\end{picture}
  }}\,}
 \def\qedj{\,{\scalebox{0.20}{ \begin{picture}(181,66) (218,-178)
    \SetWidth{3.0}
    \SetColor{Black}
    \PhotonArc(307.786,-179.786)(22.955,14.598,173.03){7.5}{3.5}
    \PhotonArc[clock](306,-170.142)(49.152,-175.498,-364.502){7.5}{8.5}
    \Line(219,-175)(398,-175)
  \end{picture}}}\,}
\def\qeda{\,{\scalebox{0.20}{
\begin{picture}(98,35) (271,-286)
    \SetWidth{3.0}
    \SetColor{Black}
    \PhotonArc(320,-277)(17.889,-26.565,206.565){7.5}{4.5}
    \Line(272,-285)(368,-285)
\end{picture}}}\,}
\def\qedcoiz{\,{\scalebox{0.20}
  { \begin{picture}(258,93) (191,-180)
    \SetWidth{3.0}
    \SetColor{Black}
    \PhotonArc(328,-187)(91.214,15.255,164.745){7.5}{12.5}
    \Line(192,-163)(448,-163)
    \Line(335.998,-178.998)(304.002,-147.002)\Line(304.002,-178.998)(335.998,-147.002)
    \Text(315,-210)[lb]{\Huge{\Black{$0$}}}
  \end{picture}}}\,}
\def\qedcoiu{\,{\scalebox{0.20}
  { \begin{picture}(258,93) (191,-180)
    \SetWidth{3.0}
    \SetColor{Black}
    \PhotonArc(328,-187)(91.214,15.255,164.745){7.5}{12.5}
    \Line(192,-163)(448,-163)
    \Line(335.998,-178.998)(304.002,-147.002)\Line(304.002,-178.998)(335.998,-147.002)
    \Text(315,-210)[lb]{\Huge{\Black{$1$}}}
  \end{picture}}}\,}
	\def\qedcoiz{\,{\scalebox{0.20}
  { \begin{picture}(258,93) (191,-180)
    \SetWidth{3.0}
    \SetColor{Black}
    \PhotonArc(328,-187)(91.214,15.255,164.745){7.5}{12.5}
    \Line(192,-163)(448,-163)
    \Line(335.998,-178.998)(304.002,-147.002)\Line(304.002,-178.998)(335.998,-147.002)
    \Text(315,-210)[lb]{\Huge{\Black{$0$}}}
  \end{picture}}}\,}
\def\pre3{\,{\scalebox{0.20}{  
\begin{picture}(226,162) (319,-80)
    \SetWidth{3.0}
    \SetColor{Black}
    \Arc(432,-13.962)(57.658,123.711,416.289)
    \Line(320,-14)(374,-15)
    \Line(490,-14)(544,-14)
    \Arc(432,34)(32,180,540)
    \Line(425.001,62.001)(434.999,69.999)\Line(426.001,70.999)(433.999,61.001)
    \Line(430.001,-5.999)(437.999,7.999)\Line(427.001,4.999)(440.999,-2.999)
    \Line(435.001,-78.999)(444.999,-65.001)\Line(433.001,-67.001)(446.999,-76.999)
    \Text(440,75)[lb]{\Huge{\Black{$0$}}}
    \Text(445,-15)[lb]{\Huge{\Black{$1$}}}
    \Text(445,-91)[lb]{\Huge{\Black{$0$}}}
  \end{picture}}}\,}  
\def\oudam{\,{\scalebox{0.20}{ 
  \begin{picture}(210,92) (191,-149)
    \SetWidth{3.0}
    \SetColor{Black}
    \Arc(288,-132)(16,180,540)
    \PhotonArc[clock](288,-130)(64.031,-178.21,-361.79){7.5}{10.5}
    \Line(192,-132)(272,-132)
    \Line(307,-140)(400,-132)
   \end{picture}}}\,} 
  \def\prelied{\,{\scalebox{0.20}{ 
	\begin{picture}(258,110) (175,-163)
    \SetWidth{3.0}
    \SetColor{Black}
    \PhotonArc[clock](296,-133.993)(72.994,-170.532,-369.468){7.5}{13.5}
    \Line(176,-146)(432,-146)
    \Line(240.002,-161.998)(271.998,-130.002)\Line(240.002,-130.002)(271.998,-161.998)
    \Line(335.998,-130.002)(304.002,-161.998)\Line(335.998,-161.998)(304.002,-130.002)
    \Text(255,-170)[lb]{\Huge{\Black{$1$}}}
    \Text(325,-175)[lb]{\Huge{\Black{$0$}}}
 \end{picture}}}\,} 
\def\preliekb{\,{\scalebox{0.20}{ 
\begin{picture}(328,96) (243,-83)
    \SetWidth{3.0}
    \SetColor{Black}
    \Arc(346,-74.839)(24.549,-12.136,192.136)
    \PhotonArc(433.74,-86.565)(35.554,7.377,167.715){7.5}{5.5}
    \PhotonArc[clock](406.876,-127.443)(131.948,159.855,21.073){7.5}{16.5}
    \Line(244,-82)(570,-82)
 \end{picture}}}\,} 
	\def\prelz{\,{\scalebox{0.20}{
	\begin{picture}(221,65) (246,-170)
    \SetWidth{3.0}
    \SetColor{Black}
    \PhotonArc[clock](357.533,-197.483)(83.883,154.219,24.273){7.5}{10.5}
    \Arc[clock](381.696,-150.652)(18.8,-146.604,-383.007)
    \Line(247,-160)(454,-160)
    \Line(450,-160)(466,-160)
    \Line(311.001,-168.999)(324.999,-153.001)\Line(310.001,-154.001)(325.999,-167.999)
    \Text(320,-180)[lb]{\Huge{\Black{$0$}}}
  \end{picture}}}\,} 
	\def\prelieu{\,{\scalebox{0.20}{
	\begin{picture}(261,89) (240,-107)
    \SetWidth{3.0}
    \SetColor{Black}
    \PhotonArc[clock](336.9,-107.674)(31.613,167.812,10.34){7.5}{4.5}
    \PhotonArc(369.948,-111.282)(84.459,5.628,173.69){7.5}{12.5}
    \Line(241,-102)(500,-102)
    \Line(412.001,-105.999)(421.999,-96.001)\Line(412.001,-96.001)(421.999,-105.999)
    \Text(425,-130)[lb]{\Huge{\Black{$1$}}}
  \end{picture}}}\,}
	\def\grpddz{\,{\scalebox{0.20}{
	\begin{picture}(439,121) (77,-67)
    \SetWidth{3.0}
    \SetColor{Black}
    \Arc(194.875,-59.763)(62.883,13.578,165.482)
    \Arc(191,-68)(18.385,158,518)
    \Arc(191.5,-41.389)(28.448,-49.435,229.435)
    \PhotonArc(402.092,-67.43)(56.716,21.114,156.704){7.5}{7.5}
    \Line(78,-45)(162,-44)
    \Line(220,-44)(515,-45)
    \PhotonArc[clock](293.5,-138.605)(207.779,153.224,26.776){7.5}{23.5}
    \Line(291.001,-46)(300.999,-40)\Line(293,-38.001)(299,-47.999)
    \Line(395,-48)(399,-42)\Line(394,-43)(400,-47)
    \Text(300,-60)[lb]{\Huge{\Black{$0$}}}
    \Text(400,-60)[lb]{\Huge{\Black{$0$}}}
  \end{picture}}}\,}
	\def\grpddze{\,{\scalebox{0.20}{
	\begin{picture}(375,121) (140,-67)
    \SetWidth{3.0}
    \SetColor{Black}
    \Arc(321,-88)(18.028,177,537)
    \Arc[clock](320.662,-65.051)(27.784,-124.312,-405.89)
    \Arc[clock](226.875,-79)(35.05,165.121,13.194)
    \PhotonArc(409.68,-83.972)(39.85,20.524,161.003){7.5}{5.5}
    \Line(141,-70)(294,-69)
    \Line(347,-70)(514,-71)
    \PhotonArc[clock](323,-153.441)(176.444,152.145,27.855){7.5}{19.5}
    \Line(221.001,-71)(228.999,-67)\Line(223,-65.001)(227,-72.999)
    \Line(405,-72)(411,-66)\Line(405,-66)(411,-72)
    \Text(235,-85)[lb]{\Huge{\Black{$0$}}}
    \Text(420,-85)[lb]{\Huge{\Black{$0$}}}
  \end{picture}}}\,}
	\def\grdtz{\,{\scalebox{0.20}{
	\begin{picture}(341,123) (226,-61)
    \SetWidth{3.0}
    \SetColor{Black}
    \Arc[clock](305,-55.923)(31.247,172.788,7.212)
    \PhotonArc(453.653,-66.21)(43.406,17.718,163.662){7.5}{6.5}
    \Line(227,-54)(566,-54)
    \PhotonArc[clock](392.874,-89.525)(143.346,165.651,13.937){7.5}{19.5}
    \Line(301.001,-56.999)(310.999,-49.001)\Line(302.001,-48.001)(309.999,-57.999)
    \Line(365.001,-56.999)(374.999,-49.001)\Line(366.001,-48.001)(373.999,-57.999)
    \Line(449.001,-56)(456.999,-50)\Line(450,-49.001)(456,-56.999)
    \Text(315,-70)[lb]{\Huge{\Black{$0$}}}
    \Text(380,-70)[lb]{\Huge{\Black{$0$}}}
    \Text(465,-70)[lb]{\Huge{\Black{$0$}}}
  \end{picture}}}\,}
\def\diagramme #1{\vskip 4mm \centerline {#1} \vskip 4mm}
\begin{document}
\title{
{On the pre-Lie algebra of specified Feynman graphs }}

\author{Mohamed Belhaj Mohamed}
\address{{Mathematics Departement, Sciences college, Taibah University, Kingdom of Saudi Arabia~.}\vspace{0.01cm}
{Laboratoire de math\'ematiques physique fonctions sp\'eciales et applications, Universit\'e de Sousse, rue Lamine Abassi 4011 H. Sousse,  Tunisie.}}     
        \email{mohamed.belhajmohamed@isimg.tn}

\date{December 2018}
\noindent{\footnotesize{${}\phantom{a}$ }}
\begin{abstract}
We study the pre-Lie algebra of specified Feynman graphs $\wt{V}_{\Cal T}$ and we define a pre-Lie structure on its doubling space $\wt{\Cal F}_{\Cal T}$. We prove that $\wt{\Cal F}_{\Cal T}$ is pre-Lie module on $\wt{V}_{\Cal T}$ and we find some relations between the two pre-Lie structures. Also, we study the enveloping algebras of two pre-Lie algebras denoted respectively by $(\wt {\Cal D'}_{\Cal T}, \bigstar, \Phi)$ and $(\wt {\Cal H'}_{\Cal T}, \star, \Psi)$ and we prove that $(\wt {\Cal D'}_{\Cal T}, \bigstar, \Phi)$ is a module-bialgebra on $(\wt {\Cal H'}_{\Cal T}, \star, \Psi)$. 
\end{abstract}
\maketitle
\textbf{MSC Classification}: 05C90, 81Q30, 16T05, 16T15, 16S30.

\textbf{Keywords}: Bialgebra, Hopf algebra, Feynman graphs, Pre-Lie algebra, Enveloping algebra, Comodule-coalgebra, Module-bialgebra, Doubling bialgebra.
\tableofcontents
\section{Introduction}  
Hopf algebras of Feynman graphs have been studied by A. Connes and D. Kreimer in \cite{A.D2000}, \cite{ad01}, \cite{ad98}and \cite{dk98} as a powerful tool to explain the combinatorics of renormalization in quantum field theory, and it appeared thereafter that in the works of K. Ebrahimi-Fard, D. Manchon \cite{Dm11, Dm08}, van Suijlekom \cite{wvs, wvs06} and many others.

In \cite{mb}, we have introduced the concept of doubling bialgebra of specified Feynman graphs to give a sense to some divergent integrals given by a Feynman graphs. It is given by the vector space $\wt{\Cal D}_{\Cal T}$ spanned by the pairs $(\bar\Gamma, \bar\gamma)$ of locally $1PI$ specified graphs, with $\bar\gamma \subset \bar\Gamma$ and $\bar\Gamma / \bar\gamma \in \wt {\Cal H}_{\Cal T}$. The product $m$ is again given by juxtaposition:
$$
m \big((\bar\Gamma_1, \bar\gamma_1)\otimes (\bar\Gamma_2, \bar\gamma_2)\big) = (\bar\Gamma_1  \bar\Gamma_2, \bar\gamma_1 \bar\gamma_2),
$$
and the coproduct $\Delta$ is defined as follows: 
$$
\Delta (\bar\Gamma, \bar\gamma ) = \sum_{\substack{\bar\delta \subseteq \bar\gamma \\ \bar\gamma / \bar\delta \in \Cal T }} ( \bar \Gamma, \bar\delta )  \otimes ( \bar\Gamma / \bar\delta, \bar\gamma / \bar\delta).
$$
We have also studied, in collaboration with Dominique Manchon \cite{DB}, the notion of doubling bialgebra in the context of rooted trees, we have defined the doubling bialgebras of rooted trees given by extraction contraction and admissible cuts, and we have shown the existence of many relations between these two structures.

Pre-Lie algebra of insertion  was studied by A. Connes and D. Kreimer \cite{A.D2000} in the context of Feynman graphs and F. Chapoton and M. Livernet in context of rooted trees \cite{cl} as being the space of primitive elements in the graded dual of a right-sided Hopf algebra. The Hopf algebra of specified Feynman graphs  is also right-sided so we can construct a pre-Lie  structure on its graded dual.

In this article, we star by describing this pre-Lie structure. On the space of connected specified Feynman graphs $\wt {V}_{\Cal T}$ we define the pre-Lie product  $\triangleright$ by insertion of graphs. For all $\bar \Gamma_1 = (\Gamma_1 , \underline{i}), \bar \Gamma_2 = (\Gamma_2 , \underline{j}) \in \wt {V}_{\Cal T}$ we have: 
$$
(\Gamma_1 , \underline{i}) \triangleright (\Gamma_2 , \underline{j})  := \sum_{v \in \Cal V (\Gamma_2)} (\Gamma_1 , \underline{i}) \trianglerightv (\Gamma_2 , \underline{j}),
$$
where $(\Gamma_1 , \underline{i}) \trianglerightv (\Gamma_2 , \underline{j})$ is the graph obtained by replacing the vertex $v$ by the graph $\Gamma_1$ in $\Gamma_2$.

In the third section,  we find a pre-Lie structure on the doubling space of connected specified graphs $\wt{V}_{\Cal T}$ noted $\wt{\Cal F}_{\Cal T}$. The pre-Lie product is defined for all $(\bar \Gamma_1 , \bar \gamma_1)$, $(\bar \Gamma_2, \bar \gamma_2)$ in  $\wt{\Cal F}_{\Cal T}$ by:
$$
(\bar \Gamma_1 , \bar \gamma_1)  \odot (\bar \Gamma_2, \bar \gamma_2) := \sum_{\substack{v \in \Cal V(\Gamma_2 - \gamma_2)\; , v = \smop{res} \Gamma_1}}  (\bar\Gamma_1 \trianglerightv \bar\Gamma_2 , \bar\gamma_1 \bar\gamma_2),
$$
where $v \in \Cal V(\Gamma_2 - \gamma_2)$ denotes that $v$ is a vertex of $\Gamma_2$ but its not a vertex of $\gamma_2$ and $\mop{res}\Gamma_1$ denotes the residue of the graph $\Gamma_1$. We also prove that $\wt{\Cal F}_{\Cal T}$ is a pre-Lie module on $\wt{V}_{\Cal T}$ where the action is given for all $\bar \Gamma_1 \in \wt{V}_{\Cal T}$ and $(\bar \Gamma_2, \bar \gamma_2) \in \wt{\Cal F}_{\Cal T}$ by:
$$
\bar \Gamma_1 \rightarrow (\bar \Gamma_2, \bar \gamma_2) := \sum_{\substack{v \in \Cal V(\gamma_2)\; , v = \smop{res} \gamma_1}}  (\bar\Gamma_1 \trianglerightv \bar\Gamma_2 , \bar\Gamma_1 \trianglerightv \bar\gamma_2). 
$$
We also give some relations between the action $\rightarrow$ and the pre-Lie product $\odot$. We show that the action $\rightarrow$ is a derivation of the algebra $(\wt{\Cal F}_{\Cal T}, \odot)$. In other words, for any $\bar \Gamma_1 \in \wt {V}_{\Cal T}$ and  $(\bar \Gamma_2, \bar \gamma_2), (\bar \Gamma_3, \bar \gamma_3) \in \wt {\Cal F}_{\Cal T}$, we have:
$$
\bar \Gamma_1 \rightarrow  \big((\bar \Gamma_2, \bar \gamma_2) \odot (\bar \Gamma_3, \bar \gamma_3) \big) = \big(\bar \Gamma_1 \rightarrow (\bar \Gamma_2, \bar \gamma_2) \big) \odot (\bar \Gamma_3, \bar \gamma_3) + (\bar \Gamma_2, \bar \gamma_2) \odot \big(\bar \Gamma_1\rightarrow (\bar \Gamma_3, \bar \gamma_3) \big),
$$
and we show that the following diagram is commutative:
\diagramme{
\xymatrix{
\wt{V}_{\Cal T} \otimes \wt{\Cal F}_{\Cal T} \ar[d]_{I\otimes P_2} \ar[rrr]^{\rightarrow} 
&&&\wt{\Cal F}_{\Cal T} \ar[d]^{P_2}\\
\wt{V}_{\Cal T}\otimes\wt{\Cal H}_{\Cal T} \ar[rrr]_{\triangleright} 
&&& \wt{\Cal H}_{\Cal T} }
}
which means that the projection on the second component $P_2$ is a morphism of pre-Lie modules.

In the last section, we use the method of Oudom and Guin  \cite{og} to find the enveloping algebra of the pre-Lie algebra. We start by finding the enveloping algebra of the pre-Lie algebra of specified Feynman graphs. Starting from the pre-Lie algebra $(\wt {V}_{\Cal T}, \triangleright)$, we consider the Hopf symmetric algebra $\wt {\Cal H'}_{\Cal T} : = \Cal S (\wt {V}_{\Cal T})$ equipped with its usual unshuffling coproduct $\Psi$ and a product $\star$ coming from the pre-Lie structure:
$$
\bar\Gamma \star \bar\Gamma' = \sum_{(\Gamma)}  \bar\Gamma^{(1)} (\bar\Gamma^{(2)} \triangleright \bar\Gamma').
$$

By construction, the space $(\wt {\Cal H'}_{\Cal T}, \star, \Psi)$ is a Hopf algebra which is isomorphic to the enveloping algebra $\Cal U(\wt V_{Lie})$ of the pre-Lie algebra $\wt {V}_{Lie}$. We prove that $(\wt {\Cal H'}_{\Cal T}, m, \Psi)$ is a comodule-coalgebra on $(\wt {\Cal H}_{\Cal T},  m, \Delta)$, where $\wt {\Cal H}_{\Cal T}$ and $\wt {\Cal H'}_{\Cal T}$ are isomorphic as algebras.

Similarly, we construct the enveloping algebra of doubling pre-Lie algebra of specified Feynman graphs. Starting from the pre-Lie algebra $(\wt {\Cal F}_{\Cal T}, \odot)$, we consider the Hopf symmetric algebra $\wt {\Cal D'}_{\Cal T} : = \Cal S (\wt {\Cal F}_{\Cal T})$ equipped with its usual unshuffling coproduct $\phi$ and the product $\star$ coming from the pre-Lie structure:
$$
(\bar\Gamma, \bar\gamma) \bigstar (\bar\Gamma', \bar\gamma') = \sum_{(\Gamma, \gamma)}  (\bar\Gamma, \bar\gamma)^{(1)} \big((\bar\Gamma, \bar\gamma)^{(2)} \odot (\Gamma', \gamma')\big).
$$

By construction, the space $(\wt {\Cal D'}_{\Cal T}, \bigstar, \phi)$ is a Hopf algebra which is isomorphic to the enveloping Hopf algebra $\Cal U(\wt {\Cal F}_{Lie})$ of the Lie algebra $\wt {\Cal F}_{Lie}$, and we finish by proving that $(\wt {\Cal D'}_{\Cal T}, m, \Phi)$ is a comodule-coalgebra on $(\wt {\Cal D}_{\Cal T},  m, \Delta)$, where $\wt {\Cal D}_{\Cal T}$ and $\wt {\Cal D'}_{\Cal T}$ are isomorphic as algebras.

 At the end, we give a relation between the two hopf structures $(\wt {\Cal D'}_{\Cal T}, \bigstar, \Phi)$ and $(\wt {\Cal H'}_{\Cal T}, \star, \Psi)$, we prove that $(\wt {\Cal D'}_{\Cal T}, \bigstar, \Phi)$ is a module-bialgebra on $(\wt {\Cal H'}_{\Cal T}, \star, \Psi)$.

\vspace{1cm}
\noindent
{\bf Acknowledgements:} I would like to thank Dominique Manchon for support and advice.
\section{Feynman graphs}

\subsection{Basic definitions}
A Feynman graph is a graph with a finite number of vertices and edges, which can be internal or external. An internal edge is an edge connected at both ends to a vertex, an external edge is an edge with one open end, the other end being connected to a vertex. The edges are obtained by using half-edges.
More precisely, let us consider two finite sets $\Cal V$ and $\Cal E$. A graph $\Gamma$ with $\Cal V$ (resp. $\Cal E$) as set of vertices (resp. half-edges) is defined as follows: let $\sigma : \Cal E \longrightarrow \Cal E $ be an involution and $\partial : \Cal E  \longrightarrow \Cal V$. For any vertex $v\in \Cal V$ we denote by $st(v) = \{e \in \Cal E / \partial (e) = v \}$ the set of half-edges adjacent to $v$. The fixed points of $\sigma$ are the \textsl {external edges} and the \textsl {internal edges} are given by the pairs $\{e , \sigma (e)\}$ for $e \neq \sigma (e)$. The graph $\Gamma$ associated to these data is obtained by attaching half-edges $e\in st(v)$ to any vertex $v\in\Cal V$, and joining the two half-edges $e$ and $\sigma(e)$ if $\sigma(e)\not =e$.\\

Several types of half-edges will be considered later on: the set $\Cal E$ is partitioned into several pieces $\Cal E_i$. In that case we ask that the involution $\sigma$ respects the different types of half-edges, i.e. $\sigma(\Cal E_i)\subset \Cal E_i$.\\

We denote by $\Cal I(\Gamma)$ the set of internal edges and by $\mop{Ext}(\Gamma)$ the set of external edges. The \textsl {loop number} of a graph $\Gamma$ is given by: 
$$L(\Gamma) =  \left|\Cal I (\Gamma)\right| - \left|\Cal V (\Gamma)\right|  + \left|\pi_0 (\Gamma)\right|,$$
where $\pi_0 (\Gamma)$ is the set of connected components of $\Gamma$.\\

A one-particle irreducible graph (in short, $1PI$ graph) is a connected graph which remains connected when we cut any internal edge. A disconnected graph is said to be locally $1PI$ if any of its connected components is $1PI$.

A covering subgraph of $\Gamma$ is a Feynman graph $\gamma$ (not necessarily connected), obtained from $\Gamma$ by cutting internal edges. In other words:
\begin{enumerate}
\item $ \Cal V (\gamma) = \Cal V (\Gamma)$. 
\item $ \Cal E (\gamma) =  \Cal E (\Gamma)$.
\item $ \sigma_\Gamma (e) = e \Longrightarrow \sigma_\gamma (e) = e$.
\item If $ \sigma_\gamma (e) \neq  \sigma_\Gamma (e) \;\; \text{then} \;\; \sigma_\gamma (e) = e \;\; \text{and} \;\; \sigma_\gamma (\sigma_\Gamma (e)) = \sigma_\Gamma (e)$.
\end{enumerate}

For any covering subgraph $\gamma$, the contracted graph $\Gamma / \gamma$ is defined by shrinking all connected components of $\gamma$ inside $\Gamma$ onto a point.  

The residue of the graph $\Gamma$, denoted by $\mop{res} \Gamma$, is the contracted graph $\Gamma / \Gamma$.

The skeleton of a graph $\Gamma$ denoted by $\mop {sk} \Gamma$ is the graph obtained by cutting all internal edges.
\subsection {Quantum field theory and specified graphs } 
We will work inside a physical theory $\Cal T$ ($\varphi^3 , \; \varphi^4$, QED, QCD  etc). The particular form of the Lagrangian leads to consider certain types of vertices and edges. A difficulty appears: the type of half-edges of $st (v)$ is not sufficient to determine the type of the vertex $v$. We denote by $\Cal E (\Cal T)$ the set of possible types of half-edges and by $\Cal V(\Cal T)$ the set of possible types of vertices.
\begin{ex}
$\Cal E ( \varphi^3) = \{ \graphearetenor \} \;\;\;, \;\;\; \Cal E ( QED) = \{ \graphearetenor  , \graphearetecur \}$.\\
${\Cal V}(\varphi^3) = \{ \graphearetenorcroixzero , \graphearetenorcroixun , \graphev  \} \;\;\;, \;\;\;{\Cal V}(QED) = \{ \graphearetenorcroixzero , \graphearetenorcroixun , \graphevcur , \graphearetecurcroixun \}$. 
\end{ex}
\begin{definition} \label{df1}
A specified graph of theory $\Cal T$ is a couple $(\Gamma , \underline{i})$ where:
\begin{enumerate}
\item $\Gamma$ is a locally $1PI$ superficially divergent graph (the residue of $\Gamma$ is an element of $\Cal T$) with half-edges and vertices of the type prescribed in $\Cal T$. 
\item $\underline{i} : \pi_0(\Gamma) \longrightarrow \mathbb N$, the values of $\underline {i}(\gamma)$ being prescribed by the possible types of vertex obtained by contracting the connected component $\gamma$ on a point.
\end{enumerate}
We will say that $(\gamma , \underline{j})$ is a specified covering subgraph of $(\Gamma , \underline{i})$, and we note $\big( (\gamma , \underline{j}) \subset (\Gamma , \underline{i}) \big)$  if:
\begin{enumerate}
\item $\gamma$ is a covering subgraph of $\Gamma$.
\item if $\gamma_0$ is a full connected component of $\gamma$, i.e if $\gamma_0$ is also a connected component of $\Gamma$, then $\underline{j} (\gamma_0) = \underline{i} (\gamma_0)$.
\end{enumerate}
\end{definition}
\begin{remark}
Sometimes we denote by $\bar \Gamma = (\Gamma , \underline{i})$ the specified graph, and we will write $\bar \gamma \subset \bar \Gamma$ for $(\gamma , \underline{j}) \subset (\Gamma , \underline{i})$.
\end{remark}
\begin{definition}
Let be $(\gamma , \underline{j}) \subset (\Gamma , \underline{i})$. The contracted specified subgraph is written:
$$\Gamma / \bar \gamma = (\Gamma/\bar\gamma , \underline{i}),$$
where $\bar\Gamma/\bar\gamma$ is obtained by contracting each connected component of $\gamma$ on a point, and specifying the vertices obtained with $\underline {j}$.
\end{definition}
\begin{remark}
The specification $\underline{i}$ is the same for the graph $\bar\Gamma$ and the contracted graph $\bar\Gamma / \bar \gamma$. 
\end{remark}

\subsection {Hopf algebras of Feynman graphs}
%
Let $ \wt V_{\Cal T}$ be the vector space generated by $1PI$ connected specified graphs $\Gamma$ with edges in $\Cal E (\Cal T)$ and vertices in $\Cal V(\Cal T)$ such that $\mop {res} \Gamma$ is a vertex in $\Cal V_{\Cal T} $ (condition of superficial divergence \cite{SA}, \cite{A.D2000}, \cite{dk98}). Let $\wt {\Cal H}_{\Cal T} = S ( \wt V_{\Cal T})$ be the vector space generated by the specified superficially divergent Feynman graphs of a field theory $\Cal T$. The product is given by disjoint union, the unit $\un$ is identified with the empty graph and the coproduct is defined by:
\begin{eqnarray*}
\Delta (\bar\Gamma ) = \sum_{\substack{\bar\gamma \subseteq \bar\Gamma \\ \bar\Gamma / \bar\gamma \in \Cal T }} \bar \gamma \otimes \bar\Gamma / \bar\gamma,
\end{eqnarray*}
where the sum runs over all locally $1PI$ specified covering subgraphs $ \bar\gamma = (\gamma,\underline{j})$ of $ \bar\Gamma = (\Gamma,\underline{i})$, such that the contracted subgraph $(\Gamma/(\gamma,\underline{j}) , \underline{i})$ is in the theory $\Cal T$.
\begin{remark}
The condition $\bar\Gamma / \bar\gamma \in \Cal T$ is crucial, and means also that $\bar\gamma$ is a ''superficially divergent'' subgraph. For example, in $\varphi^3$:
$$\Gamma = \graphedeltasansindice  \;\;\text{ and } \;\; \gamma = \graphedeltacontratddd \graphev \graphev,$$
gives $\Gamma / \gamma = \graphedeltacontratdddd$ by contraction, which must be eliminated because of the tetravalent vertex.  
\end{remark}
\begin{ex} \label{ex1}
In $\varphi^3$ Theory:
\begin{eqnarray*}
\Delta(\grapheexmdel , 0) &=& (\grapheexmdel , 0) \otimes \graphearetenorcroixzero  + \graphev \graphev \graphev \graphev \otimes (\grapheexmdel , 0)\\
&&\\
&+& (\graphev \graphev \graphecerc , 0)\otimes (\graphecerccroizero , 0)\\
&&\\
&+&  (\graphev \graphev \graphecerc , 1)\otimes (\graphecerccroiun , 0).
\end{eqnarray*}
\textnormal {In QED:}
\begin{eqnarray*}
\Delta (\qedj , 1) &=& \graphevcur \graphevcur \graphevcur \graphevcur\otimes (\qedj , 1)\\
&&\\
&+& (\qedj , 1) \otimes \graphearetenorcroixun  + (\qeda \graphevcur \graphevcur, 0) \otimes (\qedcoiz , 1)\\
&&\\ 
&+&(\qeda \graphevcur \graphevcur, 1) \otimes (\qedcoiu , 1).
\end{eqnarray*}
\end{ex}
\begin{theorem} \cite{DMB}
The coproduct $\Delta$ is coassociative. 
\end{theorem}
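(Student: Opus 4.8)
The plan is to check coassociativity on a generator $\bar\Gamma = (\Gamma, \underline{i})$ by expanding both iterated coproducts and matching their summands one by one. Applying the definition twice gives
$$
(\Delta \otimes \mathrm{Id})\Delta(\bar\Gamma) = \sum \bar\delta \otimes (\bar\gamma/\bar\delta) \otimes (\bar\Gamma/\bar\gamma),
$$
where the sum runs over nested specified covering subgraphs $\bar\delta \subseteq \bar\gamma \subseteq \bar\Gamma$ subject to $\bar\gamma/\bar\delta \in \Cal T$ and $\bar\Gamma/\bar\gamma \in \Cal T$, while
$$
(\mathrm{Id} \otimes \Delta)\Delta(\bar\Gamma) = \sum \bar\gamma' \otimes \bar\varepsilon \otimes \big( (\bar\Gamma/\bar\gamma')/\bar\varepsilon \big),
$$
where the sum runs over $\bar\gamma' \subseteq \bar\Gamma$ and $\bar\varepsilon \subseteq \bar\Gamma/\bar\gamma'$ subject to $\bar\Gamma/\bar\gamma' \in \Cal T$ and $(\bar\Gamma/\bar\gamma')/\bar\varepsilon \in \Cal T$. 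The goal is then to exhibit a bijection between the two index sets that identifies the corresponding tensors.

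The bijection I would use sends the nested pair $(\bar\delta, \bar\gamma)$ to $(\bar\gamma', \bar\varepsilon) := (\bar\delta,\, \bar\gamma/\bar\delta)$. Its well-definedness and invertibility rest on two structural facts, which I would isolate as preliminary lemmas. First, for a fixed specified covering subgraph $\bar\delta \subseteq \bar\Gamma$, the assignment $\bar\gamma \mapsto \bar\gamma/\bar\delta$ is a bijection between the specified covering subgraphs $\bar\gamma$ of $\bar\Gamma$ containing $\bar\delta$ and the specified covering subgraphs of $\bar\Gamma/\bar\delta$; this holds because the internal edges of $\bar\Gamma/\bar\delta$ are exactly the internal edges of $\bar\Gamma$ not contracted in forming $\bar\delta$, so cutting edges commutes with contraction. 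Second, I would establish the ``third isomorphism'' identity $(\bar\Gamma/\bar\delta)/(\bar\gamma/\bar\delta) = \bar\Gamma/\bar\gamma$ for nested $\bar\delta\subseteq\bar\gamma$, together with the fact, recorded in the remark following Definition~\ref{df1}, that the outer specification $\underline{i}$ is carried along unchanged by every contraction. Granting these, the three tensor factors match termwise: $\bar\delta = \bar\gamma'$, $\bar\gamma/\bar\delta = \bar\varepsilon$, and $\bar\Gamma/\bar\gamma = (\bar\Gamma/\bar\delta)/(\bar\gamma/\bar\delta) = (\bar\Gamma/\bar\gamma')/\bar\varepsilon$.

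It remains to see that the admissibility conditions defining the two sums correspond under this bijection, and this is the step I expect to be the real obstacle. Both sides already share the requirement $\bar\Gamma/\bar\gamma \in \Cal T$, which equals $(\bar\Gamma/\bar\gamma')/\bar\varepsilon \in \Cal T$ on the right. The remaining conditions are $\bar\gamma/\bar\delta \in \Cal T$ on the left against $\bar\Gamma/\bar\delta \in \Cal T$ on the right, so I must show these are equivalent. The key observation I would exploit is that membership in $\Cal T$ after contraction is a \emph{local} condition: contraction leaves the residue unchanged, so $\mop{res}(\bar\Gamma/\bar\delta) = \mop{res}\bar\Gamma \in \Cal T$ holds automatically, and the only genuine requirement is that each connected component of $\delta$ contract to a vertex whose type lies in $\Cal V(\Cal T)$. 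That type is computed from the half-edges of the component that are not internal to it, a datum intrinsic to the component and to $\delta$ alone, hence independent of whether the component is viewed inside $\gamma$ or inside $\Gamma$. Consequently $\bar\gamma/\bar\delta \in \Cal T \iff \bar\Gamma/\bar\delta \in \Cal T$.

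With the locality argument in place the two index sets coincide and the iterated coproducts agree term by term, giving coassociativity. The final bookkeeping point is to confirm that the intermediate specification indices $\underline{j}$ are also matched by the bijection; once the type of each contracted vertex is seen to be intrinsic as above, this follows from the compatibility of $\underline{i}$ and $\underline{j}$ with contraction built into Definition~\ref{df1}.
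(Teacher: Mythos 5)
The paper itself offers no proof of this theorem; it is imported from \cite{DMB}, so your attempt can only be measured against the standard argument that reference uses, and your strategy is indeed that argument: expand both iterated coproducts, match terms via the bijection $(\bar\delta,\bar\gamma)\mapsto(\bar\delta,\bar\gamma/\bar\delta)$, use the identity $(\bar\Gamma/\bar\delta)/(\bar\gamma/\bar\delta)=\bar\Gamma/\bar\gamma$, and observe that the theory-membership conditions correspond. Your identification of the crux is also correct: the type of the vertex obtained by contracting a connected component of $\delta$ is computed from that component's non-internal half-edges together with its specification index, hence is intrinsic to $\bar\delta$, which gives $\bar\gamma/\bar\delta\in\Cal T\iff\bar\Gamma/\bar\delta\in\Cal T$ and makes the two index sets agree.

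There is, however, one genuine omission. The coproduct does not sum over \emph{all} specified covering subgraphs, only over the locally $1PI$ ones, and this constraint appears on both sides: on one side $\bar\gamma$ must be locally $1PI$ in $\bar\Gamma$ and $\bar\delta$ locally $1PI$ in $\bar\gamma$; on the other side $\bar\delta$ must be locally $1PI$ in $\bar\Gamma$ and $\bar\varepsilon=\bar\gamma/\bar\delta$ locally $1PI$ in $\bar\Gamma/\bar\delta$. Your index sets drop this condition entirely, so as written you prove coassociativity of a larger coproduct than the one defined. To close the gap you must show that your bijection respects the constraint, i.e.\ that for $\bar\delta\subseteq\bar\gamma$ with $\delta$ locally $1PI$, the subgraph $\gamma$ is locally $1PI$ if and only if $\gamma/\delta$ is. Both directions hold but need an argument along the following lines: an internal edge of a component of $\gamma/\delta$ lifts to an internal edge $e$ of $\gamma$ not internal to $\delta$, and $(\gamma-e)/\delta=(\gamma/\delta)-e$, so connectivity of $\gamma-e$ forces connectivity of $(\gamma/\delta)-e$; conversely, cutting an internal edge $e$ of $\gamma$ either stays inside a ($1PI$) component of $\delta$, in which case that component and hence $\gamma-e$ remains connected, or $e$ descends to an edge of $\gamma/\delta$, in which case connectivity of $(\gamma/\delta)-e$ together with connectivity of the components of $\delta$ forces connectivity of $\gamma-e$. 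With this lemma inserted, your proof is complete and coincides with the argument of \cite{DMB}.
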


The Hopf algebra ${\Cal H}_{\Cal T}$ is given by identifying all graphs of degree zero (the residues) to unit $\un$:
\begin{equation}
{\Cal H}_{\Cal T} = \wt{\Cal H}_{\Cal T} / \Cal J
\end{equation}
where $\Cal J$ is the ideal generated by the elements $\un - \mop{res} \bar\Gamma$ where $\bar\Gamma$ is an $1PI$ specified graph. One immediately checks that $\Cal J$ is a bi-ideal. ${\Cal H}_{\Cal T}$ is a connected graded bialgebra, it is therefore a connected graded Hopf algebra. The coproduct then becomes:
\begin {equation}
\Delta (\bar\Gamma ) = \un \otimes \bar\Gamma + \bar\Gamma \otimes \un + \sum_{\substack{\bar\gamma \hbox{ \sevenrm proper subgraph of}\; \bar\Gamma \\ \hbox{ \sevenrm loc 1PI.}\; \bar\Gamma / \bar\gamma \in \Cal T }} \bar\gamma \otimes \bar\Gamma /\bar\gamma,
\end{equation}

\section{Pre-Lie structure on specified Feynman graphs}
\subsection{Pre-Lie algebras}

\begin{definition}
A Lie algebra over a field $k$ is a vector space $V$ endowed with a bilinear bracket $[., .]$ satisfying:
\begin{enumerate}
\item the antisymmetry:
$$
[x , y] = - [y , x] \hspace{3cm} \forall x, y \in V.
$$
\item the Jacobi identity:
$$
[x , [y , z]] + [y , [z , x]] + [z , [x , y]]  = 0 \hspace{3cm} \forall x, y , z \in V.
$$
\end{enumerate}
\end{definition}

\begin{definition}\cite{ch, dm11}
A left pre-Lie algebra over a field $k$ is a $k$-vector space $\Cal A$ with a binary composition
$\triangleright$ that satisfies the left pre-Lie identity:
\begin{equation}
(x \triangleright y) \triangleright z - x \triangleright (y \triangleright z) = (y \triangleright x) \triangleright z -  y \triangleright (x \triangleright z), 
\end{equation}
for all $x$, $y$, $z \in \Cal A$. Analogously, a right pre-Lie algebra is a $k$-vector space $\Cal A$ with a
binary composition $\triangleleft$ that satisfies the right pre-Lie identity:
\begin{equation}
(x \triangleleft y) \triangleleft z - x \triangleleft (y \triangleleft z) = (x \triangleleft z) \triangleleft y -  x \triangleleft (z \triangleleft y).
\end{equation}
\end{definition}

As any right pre-Lie algebra $(\Cal A, \triangleleft )$ is also a left pre-Lie algebra with product
$x \triangleright y := y \triangleleft x$, we will only consider left pre-Lie algebras for the moment. The left
pre-Lie identity rewrites as:
\begin{equation}
L_{[x,  y]} = [L_x,  L_y],
\end{equation}
where $L_x : A  \longrightarrow A$ is defined by $L_x y = a \triangleright b$, and where the bracket on the
left-hand side is defined by $[a, b] := a \triangleright b -  b  \triangleright a$. As a consequence this bracket
satisfies the Jacobi identity.

\subsection{Pre-Lie algebra of specified graphs}
\begin{definition} Let $\bar \Gamma_1 = (\Gamma_1 , \underline{i})$ and $\bar \Gamma_2 = (\Gamma_2 , \underline{j})$ be two connected specified graphs, i.e $\bar \Gamma_1 = (\Gamma_1 , \underline{i}), \bar \Gamma_2 = (\Gamma_2 , \underline{j}) \in \wt {V}_{\Cal T}$. We defined the insertion of $\bar \Gamma_1$ at $v \in \Cal V(\bar \Gamma_2)$ by: 
\begin{equation}
(\Gamma_1 , \underline{i}) \trianglerightv (\Gamma_2 , \underline{j})  := \left\lbrace
\begin{array}{lcl}
(\Gamma_1 \cupv \Gamma_2 , \underline{j})  \;\;\;\;\; \hbox{if} \;\; v = \mop{res} \bar\Gamma_1  \\
0  \;\; \;\; \;\; \;\; \hfill\hbox{ifnot},
\end{array}\right. 
\end{equation}

where $\Gamma_1 \cupv \Gamma_2$ is a sum of all possibles graphs obtained by replacing the vertex $v$ by the graph $\Gamma_1$ in $\Gamma_2$.\\
We define then the insertion of $\bar \Gamma_1$ in $\bar \Gamma_2$ by:
\begin{equation}
(\Gamma_1 , \underline{i}) \triangleright (\Gamma_2 , \underline{j})  := \sum_{v \in \Cal V (\Gamma_2)} (\Gamma_1 , \underline{i}) \trianglerightv (\Gamma_2 , \underline{j}).
\end{equation} 
\end{definition}
\begin{ex} 
In $\varphi^3$ Theory:
\begin{eqnarray*}
(\graphecerc , 1) \triangleright (\graphecerccroiun , 0) &=& 2 (\grapheexmdel , 0).
\end{eqnarray*}
\textnormal {In QED:}
\begin{eqnarray*}
(\qeda , 1) \triangleright (\qedcoiu , 0) &=& 2 (\qedj , 0).
\end{eqnarray*}
\end{ex}
\begin{theorem} Equiped by $\triangleright$, the space  $\wt {V}_{\Cal T}$ is a pre-Lie algebra. 
\end{theorem}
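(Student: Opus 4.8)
The plan is to verify the left pre-Lie identity directly, in the form that says the associator
$$a(\bar\Gamma_1, \bar\Gamma_2, \bar\Gamma_3) := (\bar\Gamma_1 \triangleright \bar\Gamma_2) \triangleright \bar\Gamma_3 - \bar\Gamma_1 \triangleright (\bar\Gamma_2 \triangleright \bar\Gamma_3)$$
is symmetric under the exchange of $\bar\Gamma_1$ and $\bar\Gamma_2$. This is the standard strategy used by Connes--Kreimer and by Chapoton--Livernet for insertion operations: I would expand both products into sums of specified graphs and sort the terms according to the relative position of the two inserted graphs inside $\bar\Gamma_3$, namely whether one is inserted inside the other (\emph{nested}) or whether the two are inserted at distinct vertices of $\bar\Gamma_3$ (\emph{disjoint}).

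The first key observation I would establish is that insertion does not change the residue, i.e. $\mop{res}(\bar\Gamma_1 \triangleright \bar\Gamma_2) = \mop{res}\bar\Gamma_2$. Indeed, $\bar\Gamma_1 \trianglerightv \bar\Gamma_2$ is nonzero only when $v = \mop{res}\bar\Gamma_1$, so contracting the inserted copy of $\Gamma_1$ back to a point recovers the vertex $v$ of $\Gamma_2$, hence the residue of the whole graph. Consequently the residue constraint governing the outer insertion into $\bar\Gamma_3$ is identical in $(\bar\Gamma_1 \triangleright \bar\Gamma_2)\triangleright\bar\Gamma_3$ to the one governing $\bar\Gamma_2 \triangleright \bar\Gamma_3$ by itself. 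It follows that $(\bar\Gamma_1 \triangleright \bar\Gamma_2)\triangleright \bar\Gamma_3$ is precisely the sum of all nested configurations in which $\bar\Gamma_1$ is inserted at a vertex of a copy of $\Gamma_2$ that is itself inserted at a vertex $w \in \Cal V(\Gamma_3)$ with $w = \mop{res}\bar\Gamma_2$.

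Next I would expand $\bar\Gamma_1 \triangleright (\bar\Gamma_2 \triangleright \bar\Gamma_3)$. After $\bar\Gamma_2$ is inserted at a vertex $w$ of $\Gamma_3$ via $\bar\Gamma_2 \trianglerightw \bar\Gamma_3$, the outer insertion places $\bar\Gamma_1$ at a vertex $v$ of the resulting graph, and the set of such $v$ splits into two disjoint families: those $v$ coming from the inserted copy of $\Gamma_2$ (the nested family) and those $v \in \Cal V(\Gamma_3)$ distinct from $w$ that survive (the disjoint family). The nested family reproduces, term by term, the expansion of $(\bar\Gamma_1 \triangleright \bar\Gamma_2)\triangleright \bar\Gamma_3$ from the previous step; writing $D(\bar\Gamma_1, \bar\Gamma_2, \bar\Gamma_3)$ for the disjoint family I obtain $a(\bar\Gamma_1,\bar\Gamma_2,\bar\Gamma_3) = -\,D(\bar\Gamma_1,\bar\Gamma_2,\bar\Gamma_3)$. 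Since inserting $\bar\Gamma_1$ at $v$ and $\bar\Gamma_2$ at $w \neq v$ yields the same specified graph (with the same reconnection multiplicities and the same specification data) as inserting $\bar\Gamma_2$ at $v$ and $\bar\Gamma_1$ at $w$, the term $D$ is manifestly symmetric in its first two arguments, and therefore so is the associator, which is exactly the left pre-Lie identity.

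The main obstacle is not the topological sorting but the verification that the two nested expansions genuinely coincide as sums of specified graphs. Concretely, I expect the delicate points to be: confirming that the multiplicities generated by the reconnection sums $\Gamma_1 \cupv \Gamma_2$ and $\Gamma_2 \cupw \Gamma_3$ agree on both sides of the nested cancellation, and checking that the specification maps $\underline{i}, \underline{j}$ are transported identically under the two orders of insertion. This is precisely where the residue identity $\mop{res}(\bar\Gamma_1 \triangleright \bar\Gamma_2) = \mop{res}\bar\Gamma_2$ does the essential work, since it guarantees that the vertex contracted in the residue, and hence the value of the specification attached to the contracted component, is unchanged by the inner insertion.
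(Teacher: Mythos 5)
Your proposal is correct and follows essentially the same route as the paper's own proof: both expand the two iterated insertions, split the expansion of $\bar\Gamma_1 \triangleright (\bar\Gamma_2 \triangleright \bar\Gamma_3)$ into nested insertions (inside the copy of $\Gamma_2$) and disjoint insertions (at a vertex of $\Gamma_3$ distinct from the first insertion point), cancel the nested family against $(\bar\Gamma_1 \triangleright \bar\Gamma_2) \triangleright \bar\Gamma_3$, and conclude from the symmetry of disjoint insertions at distinct vertices of $\Gamma_3$. The residue-invariance identity $\mop{res}(\bar\Gamma_1 \triangleright \bar\Gamma_2) = \mop{res}\bar\Gamma_2$, which you isolate as an explicit lemma, is exactly what the paper uses implicitly when it replaces the condition $v = \mop{res}(\bar\Gamma_1 \trianglerights \bar\Gamma_2)$ by $v = \mop{res}\bar\Gamma_2$ in its second expansion, so this is a presentational refinement rather than a different argument.
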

 \begin{proof} 
Let $(\Gamma_1 , \underline{i}), (\Gamma_2 , \underline{j})$ and $(\Gamma_3 , \underline{k})$ three elements of $\wt {V}_{\Cal T}$, we have:
\begin{eqnarray*}
&&(\Gamma_1 , \underline{i})\triangleright[(\Gamma_2 , \underline{j}) \triangleright (\Gamma_3 , \underline{k})] - [(\Gamma_1 , \underline{i})\triangleright(\Gamma_2 , \underline{j})] \triangleright (\Gamma_3 , \underline{k}) \\
&=& \sum_{v \in \Cal V (\Gamma_3)} (\Gamma_1 , \underline{i})\triangleright [(\Gamma_2 , \underline{j}) \trianglerightv (\Gamma_3 , \underline{k})]- \sum_{s \in \Cal V (\Gamma_2) \;,\;s = \smop{res} \bar\Gamma_1} [(\Gamma_1 , \underline{i})\trianglerights (\Gamma_2 , \underline{j})] \triangleright (\Gamma_3 , \underline{k}) \\ 
&=&\sum_{v \in \Cal V (\Gamma_3);\; v = \smop{res} \bar\Gamma_2} (\Gamma_1 , \underline{i})\triangleright (\Gamma_2 \cupv \Gamma_3 , \underline{k})- \sum_{s \in \Cal V (\Gamma_2)\;,\; s = \smop{res} \bar\Gamma_1} (\Gamma_1 \cups \Gamma_2 , \underline{j})\triangleright (\Gamma_3 , \underline{k}) \\
&=&\sum_{\substack{v \in \Cal V (\Gamma_3)\;,\; s \in \Cal V (\Gamma_2 \cupv \Gamma_3)\\ v = \smop{res} \bar\Gamma_2\;,\;s = \smop{res} \bar\Gamma_1}}(\Gamma_1 , \underline{i})\trianglerights  (\Gamma_2 \cupv \Gamma_3 , \underline{k})- \sum_{\substack{v \in \Cal V (\Gamma_3)\;,\; s \in \Cal V (\Gamma_2)\\ v = \smop{res} \bar\Gamma_2\;,\;s = \smop{res} \bar\Gamma_1}}([\Gamma_1 \cups \Gamma_2 ]\trianglerightv \Gamma_3 , \underline{k})\\
&=&\sum_{\substack{v \in \Cal V (\Gamma_3)\;,\; s \in \Cal V (\Gamma_2 \cupv \Gamma_3)\\ v = \smop{res} \bar\Gamma_2\;,\;s = \smop{res} \bar\Gamma_1}}(\Gamma_1 \cups [\Gamma_2 \cupv \Gamma_3] , \underline{k})- \sum_{\substack{v \in \Cal V (\Gamma_3)\;,\; s \in \Cal V (\Gamma_2)\\ v = \smop{res} \bar\Gamma_2\;,\;s = \smop{res} \bar\Gamma_1}}([\Gamma_1 \cups \Gamma_2 ]\cupv \Gamma_3 , \underline{k})\\
&=&\hspace{-3mm}\sum_{\substack{v \in \Cal V (\Gamma_3)\;,\; s \in \Cal V (\Gamma_2)\\ v = \smop{res} \bar\Gamma_2\;,\;s = \smop{res} \bar\Gamma_1}}([\Gamma_1 \cups \Gamma_2 ]\cupv \Gamma_3 , \underline{k}) + \hspace{-4mm} \sum_{\substack{s, v \in \Cal V (\Gamma_3)\;,\; s \neq v\\ v = \smop{res} \bar\Gamma_2\;,\;s = \smop{res} \bar\Gamma_1}}(\Gamma_1 \cups [\Gamma_2 \cupv \Gamma_3] , \underline{k})- \hspace{-4mm} \sum_{\substack{v \in \Cal V (\Gamma_3)\;,\; s \in \Cal V (\Gamma_2)\\ v = \smop{res} \bar\Gamma_2\;,\;s = \smop{res} \bar\Gamma_1}}([\Gamma_1 \cups \Gamma_2 ]\cupv \Gamma_3 , \underline{k})\\
&=& \sum_{\substack{s, v \in \Cal V (\Gamma_3)\;,\; s \neq v \\ v = \smop{res} \bar\Gamma_2\;,\;s = \smop{res} \bar\Gamma_1}}(\Gamma_1 \cups [\Gamma_2 \cupv \Gamma_3] , \underline{k})\\
&=& \sum_{\substack{s, v \in \Cal V (\Gamma_3)\;,\; s \neq v \\ v = \smop{res} \bar\Gamma_2\;,\;s = \smop{res} \bar\Gamma_1}}(\Gamma_2 \cupv [\Gamma_1 \cups \Gamma_3] , \underline{k})\\
&=&(\Gamma_2 , \underline{i})\triangleright[(\Gamma_1 , \underline{j}) \triangleright (\Gamma_3 , \underline{k})] - [(\Gamma_2 , \underline{i})\triangleright(\Gamma_1 , \underline{j})] \triangleright (\Gamma_3 , \underline{k}),
\end{eqnarray*}
which proves the theorem.
\end{proof}
\begin {remark} We see that the Hopf algebra $\Cal H_{\Cal T}$ is right-sided, i.e, we have:
$$\Delta (\wt {V}_{\Cal T}) \subset \Cal H_{\Cal T} \otimes \wt {V}_{\Cal T}.$$
Then the dual graded of $\wt {V}_{\Cal T}$ is a left pre-Lie algebra \cite{loro}. 
\end{remark}
\begin{corollary} Let be $\bar \Gamma_1 = (\Gamma_1 , \underline{i})$ and $\bar \Gamma_2 = (\Gamma_2 , \underline{j})$ two connected specified graphs, and we define the bracket $\big[\; , \;\big]$ by:
\begin{equation}
[\bar \Gamma_1 , \bar \Gamma_2] := \bar \Gamma_1 \triangleright \bar \Gamma_2 - \bar \Gamma_2  \triangleright  \bar \Gamma_1.
\end{equation}
$\Big(\wt {V}_{\Cal T}, \big[\; , \;\big] \Big)$ is a Lie algebra.
\end{corollary}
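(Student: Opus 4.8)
The plan is to verify directly the two axioms in the definition of a Lie algebra. Antisymmetry is built into the definition of the bracket: exchanging $\bar\Gamma_1$ and $\bar\Gamma_2$ in $[\bar\Gamma_1,\bar\Gamma_2]=\bar\Gamma_1\triangleright\bar\Gamma_2-\bar\Gamma_2\triangleright\bar\Gamma_1$ merely reverses the sign, so $[\bar\Gamma_1,\bar\Gamma_2]=-[\bar\Gamma_2,\bar\Gamma_1]$, while bilinearity is inherited from the bilinearity of $\triangleright$. All the content therefore lies in the Jacobi identity, and I would derive it as a purely formal consequence of the left pre-Lie identity established in the previous theorem, with no further appeal to the combinatorics of insertion of graphs.

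For the Jacobi identity I would introduce the associator $a(x,y,z):=(x\triangleright y)\triangleright z-x\triangleright(y\triangleright z)$ on $\wt V_{\Cal T}$. The left pre-Lie identity is exactly the assertion that $a$ is symmetric in its first two arguments, that is $a(x,y,z)=a(y,x,z)$. Writing $x=\bar\Gamma_1$, $y=\bar\Gamma_2$, $z=\bar\Gamma_3$ and expanding the cyclic sum $[[x,y],z]+[[y,z],x]+[[z,x],y]$, each outer bracket produces four terms; using the symmetry of $a$ to trade, within each triple, the two left-associated terms $(\,\cdot\triangleright\cdot\,)\triangleright\cdot$ for right-associated ones, one is left with twelve terms of the form $u\triangleright(v\triangleright w)$, which then cancel in pairs so that the cyclic sum vanishes. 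This is precisely the consequence already announced in the remark following $L_{[x,y]}=[L_x,L_y]$.

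There is no genuine obstacle here: the statement is the standard fact that every left pre-Lie algebra becomes a Lie algebra under the commutator $x\triangleright y-y\triangleright x$, and the only real work is the bookkeeping of the twelve-term cancellation. As a cleaner packaging of the same computation, I could record that the left multiplications $L_{\bar\Gamma}(\bar\Gamma'):=\bar\Gamma\triangleright\bar\Gamma'$ satisfy $L_{[\bar\Gamma_1,\bar\Gamma_2]}=L_{\bar\Gamma_1}L_{\bar\Gamma_2}-L_{\bar\Gamma_2}L_{\bar\Gamma_1}$, a direct rewriting of the pre-Lie identity; substituting this into the expansion of $[\bar\Gamma_1,[\bar\Gamma_2,\bar\Gamma_3]]$ to treat the terms in which a bracket sits on the left, and using linearity of $L_{\bar\Gamma_1}$ on the remaining term, one sees the same cancellation transparently. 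Either route completes the verification that $\bigl(\wt V_{\Cal T},[\,\cdot\,,\,\cdot\,]\bigr)$ is a Lie algebra.
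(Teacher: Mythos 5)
Your proof is correct and follows essentially the same route as the paper: the paper offers no separate argument for this corollary, relying instead on the general fact recorded in its preliminaries that the commutator of a left pre-Lie product satisfies the Jacobi identity (via $L_{[x,y]}=[L_x,L_y]$), which is exactly the computation you spell out. Your explicit twelve-term cancellation simply makes that standard implication concrete, so nothing is missing and nothing differs in substance.
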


\section{Doubling pre-Lie algebra of specified Feynman graphs}
We have studied the concept of doubling bialgebra of specified Feynman graphs \cite[\S 3]{mb} to give a sense to some divergent integrals given by Feynman graphs, We have also studied the notion of doubling bialgebra in the context of rooted trees, we have defined the doubling bialgebras of rooted trees given by extraction contraction and admissible cuts, and we have shown the existence of many relations between these two structures \cite{DB}. In this section we define a pre-Lie structure on the doubling space of connected specified graphs $\wt{V}_{\Cal T}$ noted $\wt{\Cal F}_{\Cal T}$. We prove that $\wt{\Cal F}_{\Cal T}$ is pre-Lie module on $\wt{V}_{\Cal T}$ and we find some relations between the pre-Lie structures on $\wt{\Cal F}_{\Cal T}$ and $\wt{V}_{\Cal T}$.
\subsection {Doubling bialgebra of specified graphs}

\begin{definition} \cite{mb}
Let $\wt{\Cal D}_{\Cal T}$ be the vector space spanned by the pairs $(\bar\Gamma, \bar\gamma)$ of locally $1PI$ specified graphs, with $\bar\gamma \subset \bar\Gamma$ and $\bar\Gamma / \bar\gamma \in \wt {\Cal H}_{\Cal T}$. This is the free commutative algebra generated by the corresponding connected objects. The product $m$ is again given by juxtaposition:
\begin{equation}
m \big((\bar\Gamma_1, \bar\gamma_1)\otimes (\bar\Gamma_2, \bar\gamma_2)\big) = (\bar\Gamma_1  \bar\Gamma_2, \bar\gamma_1 \bar\gamma_2),
\end{equation}
and the coproduct $\Delta$ is defined as follows: 
\begin{equation}
\Delta (\bar\Gamma, \bar\gamma ) = \sum_{\substack{\bar\delta \subseteq \bar\gamma \\ \bar\gamma / \bar\delta \in \Cal T }} ( \bar \Gamma, \bar\delta )  \otimes ( \bar\Gamma / \bar\delta, \bar\gamma / \bar\delta)
\end{equation}
\end{definition}
\begin{proposition} \cite{mb}
$({\wt{\Cal D}}_{\Cal T}, m, \Delta, u,\varepsilon)$ is a graded bialgebra, and 
\begin{eqnarray*}
P_2: {\wt{\Cal D}}_{\Cal T} &\longrightarrow& {\wt{\Cal H}}_{\Cal T} \\ (\bar\Gamma, \bar\gamma ) &\longmapsto& \bar\gamma
\end{eqnarray*} 
is a bialgebra morphism. 
\end{proposition}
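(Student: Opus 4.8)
The plan is to verify the bialgebra axioms in the order: algebra structure, compatibility and gradedness (all routine), then coassociativity (the crux), then the counit, and finally to read off that $P_2$ is a morphism. I grade $\wt{\Cal D}_{\Cal T}$ by assigning to a generator $(\bar\Gamma,\bar\gamma)$ the loop number $L(\gamma)$ of its second component. Since $L$ is additive under disjoint union, $m$ is homogeneous; and since contracting a covering subgraph splits the loop number as $L(\gamma)=L(\delta)+L(\gamma/\delta)$, each term of $\Delta(\bar\Gamma,\bar\gamma)$ has total degree $L(\gamma)$, so $\Delta$ is homogeneous as well. Associativity, commutativity and unitality of $m$ are automatic because $\wt{\Cal D}_{\Cal T}$ is the free commutative algebra on its connected generators. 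That $\Delta$ is an algebra morphism is equally routine: a covering subgraph of a juxtaposition $\bar\gamma_1\bar\gamma_2$ factors uniquely as $\bar\delta_1\bar\delta_2$ with $\bar\delta_i\subseteq\bar\gamma_i$, contraction distributes over disjoint union, and the requirement $\bar\gamma/\bar\delta\in\Cal T$ is componentwise, so $\Delta(\bar\Gamma_1\bar\Gamma_2,\bar\gamma_1\bar\gamma_2)=\Delta(\bar\Gamma_1,\bar\gamma_1)\Delta(\bar\Gamma_2,\bar\gamma_2)$.

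The essential step is coassociativity. Expanding, $(\Delta\otimes\mathrm{id})\Delta(\bar\Gamma,\bar\gamma)$ runs over chains $\bar\epsilon\subseteq\bar\delta\subseteq\bar\gamma$ subject to $\bar\delta/\bar\epsilon\in\Cal T$ and $\bar\gamma/\bar\delta\in\Cal T$, with summand $(\bar\Gamma,\bar\epsilon)\otimes(\bar\Gamma/\bar\epsilon,\bar\delta/\bar\epsilon)\otimes(\bar\Gamma/\bar\delta,\bar\gamma/\bar\delta)$, while $(\mathrm{id}\otimes\Delta)\Delta(\bar\Gamma,\bar\gamma)$ runs over pairs $\bar\epsilon\subseteq\bar\gamma$, $\bar\zeta\subseteq\bar\gamma/\bar\epsilon$ subject to $\bar\gamma/\bar\epsilon\in\Cal T$ and $(\bar\gamma/\bar\epsilon)/\bar\zeta\in\Cal T$, with summand $(\bar\Gamma,\bar\epsilon)\otimes(\bar\Gamma/\bar\epsilon,\bar\zeta)\otimes((\bar\Gamma/\bar\epsilon)/\bar\zeta,(\bar\gamma/\bar\epsilon)/\bar\zeta)$. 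I match the two sums through the bijection $\bar\delta\longleftrightarrow\bar\zeta:=\bar\delta/\bar\epsilon$: for fixed $\bar\epsilon$, lifting a covering subgraph $\bar\zeta$ of $\bar\gamma/\bar\epsilon$ to the unique intermediate $\bar\delta$ with $\bar\epsilon\subseteq\bar\delta\subseteq\bar\gamma$ and $\bar\delta/\bar\epsilon=\bar\zeta$ is a bijection. Associativity of contraction, $(\bar\Gamma/\bar\epsilon)/(\bar\delta/\bar\epsilon)=\bar\Gamma/\bar\delta$ and likewise for $\bar\gamma$, identifies the first components of the summands, which disposes of the spectator graph $\bar\Gamma$.

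The main obstacle is then purely the compatibility of the $\Cal T$-membership conditions, i.e. that $\{\bar\delta/\bar\epsilon\in\Cal T,\ \bar\gamma/\bar\delta\in\Cal T\}$ holds if and only if $\{\bar\gamma/\bar\epsilon\in\Cal T,\ \bar\gamma/\bar\delta\in\Cal T\}$ holds under the correspondence (note $(\bar\gamma/\bar\epsilon)/\bar\zeta\in\Cal T$ already equals $\bar\gamma/\bar\delta\in\Cal T$). This is exactly the content that makes the coproduct on $\wt{\Cal H}_{\Cal T}$ coassociative, which is already available (Theorem \cite{DMB}); indeed, applying $P_2\otimes P_2\otimes P_2$ to both iterated coproducts recovers the two sides of coassociativity for $\Delta$ on $\wt{\Cal H}_{\Cal T}$, and these conditions involve only the second components. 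Hence I reduce the condition-matching to the known $\wt{\Cal H}_{\Cal T}$ statement, and coassociativity of $\Delta$ on $\wt{\Cal D}_{\Cal T}$ follows.

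For the counit I take $\varepsilon$ to be the projection onto loop number zero and check the axioms by isolating the extreme terms of the coproduct sum: the term $\bar\delta=\mop{sk}\bar\gamma$ (all internal edges cut, so $\bar\Gamma/\bar\delta=\bar\Gamma$ and $\bar\gamma/\bar\delta=\bar\gamma$) yields $(\varepsilon\otimes\mathrm{id})\Delta=\mathrm{id}$, and the term $\bar\delta=\bar\gamma$ (so $\bar\gamma/\bar\delta=\mop{res}\bar\gamma$ has loop number zero) yields $(\mathrm{id}\otimes\varepsilon)\Delta=\mathrm{id}$, exactly as for $\wt{\Cal H}_{\Cal T}$. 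Finally, $P_2$ is a bialgebra morphism by inspection: it is an algebra map since $P_2(\bar\Gamma_1\bar\Gamma_2,\bar\gamma_1\bar\gamma_2)=\bar\gamma_1\bar\gamma_2$, it is counital because it is homogeneous, and it is a coalgebra map because $(P_2\otimes P_2)\Delta(\bar\Gamma,\bar\gamma)=\sum_{\bar\delta\subseteq\bar\gamma,\,\bar\gamma/\bar\delta\in\Cal T}\bar\delta\otimes\bar\gamma/\bar\delta=\Delta\big(P_2(\bar\Gamma,\bar\gamma)\big)$, the two index sets being literally identical.
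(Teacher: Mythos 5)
Your proposal is correct, but it cannot be "the same approach as the paper" for a simple reason: the paper gives no proof of this proposition at all. It is imported with the citation \cite{mb} from the author's earlier work on doubling bialgebras of graphs, so your argument is a reconstruction of the omitted proof. As such it is well built: grading by $L(\gamma)$ (the second component) is the only workable choice, and your additivity claim $L(\gamma)=L(\delta)+L(\gamma/\delta)$ is genuinely correct for covering subgraphs, precisely because $\Cal V(\delta)=\Cal V(\gamma)$ and $\Cal V(\gamma/\delta)\simeq\pi_0(\delta)$; the identification of the two extreme terms $\bar\delta=\mop{sk}\bar\gamma$ and $\bar\delta=\bar\gamma$ for the counit axioms is right (local $1PI$-ness forces any loop-number-zero covering subgraph to be the skeleton, so these terms are the unique survivors of $\varepsilon$); and the statement that $P_2\otimes P_2$ carries the coproduct of $\wt{\Cal D}_{\Cal T}$ term-by-term onto the coproduct of $\wt{\Cal H}_{\Cal T}$ is exactly why $P_2$ is a coalgebra morphism.

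The one step you should tighten is the reduction of coassociativity to the coassociativity of $\wt{\Cal H}_{\Cal T}$ ``by applying $P_2^{\otimes 3}$''. The map $P_2^{\otimes 3}$ is not injective: two subgraphs $\bar\epsilon,\bar\epsilon'\subseteq\bar\Gamma$ that are isomorphic as abstract specified graphs but not conjugate under an automorphism of $\bar\Gamma$ give distinct generators $(\bar\Gamma,\bar\epsilon)\neq(\bar\Gamma,\bar\epsilon')$ of $\wt{\Cal D}_{\Cal T}$ with the same image under $P_2$. Hence the vanishing of the image of the difference of the two iterated coproducts does not formally imply the vanishing of the difference itself; the statement of the cited theorem is not enough, you need the term-wise fact its proof establishes. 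Namely, after your bijection $\bar\delta\leftrightarrow\bar\zeta=\bar\delta/\bar\epsilon$, what remains is the pointwise equivalence: for $\bar\epsilon\subseteq\bar\delta\subseteq\bar\gamma$ with $\bar\gamma/\bar\delta\in\Cal T$, one has $\bar\delta/\bar\epsilon\in\Cal T$ if and only if $\bar\gamma/\bar\epsilon\in\Cal T$. This is true, and for covering subgraphs it is almost immediate: since $\Cal V(\epsilon)=\Cal V(\delta)=\Cal V(\gamma)$, the vertices of $\bar\delta/\bar\epsilon$ and of $\bar\gamma/\bar\epsilon$ arising from the contraction are in both cases exactly the specified residues of the connected components of $\bar\epsilon$ (whose external structure depends only on $\bar\epsilon$), and all remaining vertices and edges already carry allowed types; so both conditions reduce to the single condition that every connected component of $\bar\epsilon$ contracts to an allowed vertex type of $\Cal T$. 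Inserting this two-line argument (or explicitly invoking it as the content of the proof in \cite{DMB}, rather than of its statement) closes the only gap; everything else in your proof stands.
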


\subsection{Doubling pre-Lie algebra of specified  graphs}
Let $\wt{\Cal F}_{\Cal T}$ be the vector space spanned by the pairs $(\bar\Gamma, \bar\gamma)$ where $\Gamma$ is a $1PI$ connected specified graph, $\bar\gamma \subset \bar\Gamma$ and $\bar\Gamma / \bar\gamma \in \wt {V}_{\Cal T}$.

\begin{definition} Let be $(\bar \Gamma_1 , \bar \gamma_1)$ and $(\bar \Gamma_2, \bar \gamma_2)$ two elements of $\wt{\Cal F}_{\Cal T}$, we defined the map $\odot$ by:
\begin{equation}
(\bar \Gamma_1 , \bar \gamma_1)  \odot (\bar \Gamma_2, \bar \gamma_2) := \sum_{\substack{v \in \Cal V(\Gamma_2 - \gamma_2)\; , v = \smop{res} \Gamma_1}}  (\bar\Gamma_1 \trianglerightv \bar\Gamma_2 , \bar\gamma_1  \bar\gamma_2),
\end{equation}
where the notation $v \in \Cal V(\Gamma_2 - \gamma_2)$ denotes that $v$ is a vertex of $\Gamma_2$ but is not a vertex of $\gamma_2$.
\end{definition} 
\begin{ex} 
\textnormal {In QED:}
\begin{eqnarray*}
\left((\grapheexmdel , 0),  (\graphecerc , 1) \right) &\odot& \left((\grdtz , 1)  , (\qedcoiz , 0) \right)\\
&=& 2 \left((\grpddz , 1) , (\graphecerc , 1)(\qedcoiz , 0) \right)  \\
&& \; + \; 2  \left( (\grpddze , 1), (\graphecerc , 1)(\qedcoiz , 0)\right)
\end{eqnarray*}
\end{ex}
\begin{theorem} Equiped by $\odot$, the space $\wt{\Cal F}_{\Cal T}$ is a pre-Lie algebra. 
\end{theorem}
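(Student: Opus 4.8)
The plan is to follow the same strategy as in the proof of the previous theorem for $(\wt{V}_{\Cal T}, \triangleright)$: I would fix $x=(\bar\Gamma_1,\bar\gamma_1)$, $y=(\bar\Gamma_2,\bar\gamma_2)$, $z=(\bar\Gamma_3,\bar\gamma_3)$ in $\wt{\Cal F}_{\Cal T}$, compute the associator $x\odot(y\odot z)-(x\odot y)\odot z$, and show it is symmetric under exchanging $x$ and $y$. The key simplification is that the second slot never interacts with the insertion: at every stage of both $(x\odot y)\odot z$ and $x\odot(y\odot z)$ the second component is just the juxtaposition $\bar\gamma_1\bar\gamma_2\bar\gamma_3$, which is symmetric because juxtaposition is associative and commutative. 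Hence all the genuine combinatorics takes place on the first component, where $\odot$ coincides with the insertion product of the previous section, the only difference being that insertions are now restricted to vertices lying outside the chosen subgraph.

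Next I would expand both sides. Computing $y\odot z=\sum_{v}(\bar\Gamma_2\trianglerightv\bar\Gamma_3,\bar\gamma_2\bar\gamma_3)$ over $v\in\Cal V(\Gamma_3-\gamma_3)$ with $v=\mop{res}\Gamma_2$ and then inserting $\bar\Gamma_1$, the crucial bookkeeping step is to split the vertices of $\Gamma_2\trianglerightv\Gamma_3$ lying outside $\gamma_2\gamma_3$ into two families: those coming from the inserted copy of $\Gamma_2$ (so $s\in\Cal V(\Gamma_2-\gamma_2)$) and those coming from $\Gamma_3$ other than the replaced vertex $v$ (so $s\in\Cal V(\Gamma_3-\gamma_3)$, $s\neq v$). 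For the other side, I would compute $x\odot y=\sum_s(\bar\Gamma_1\trianglerights\bar\Gamma_2,\bar\gamma_1\bar\gamma_2)$ and insert into $z$, using that $\mop{res}(\Gamma_1\trianglerights\Gamma_2)=\mop{res}\Gamma_2$ (insertion preserves the residue), so the admissibility condition on $v$ is unchanged, together with the nested-insertion identity $(\bar\Gamma_1\trianglerights\bar\Gamma_2)\trianglerightv\bar\Gamma_3=\bar\Gamma_1\trianglerights(\bar\Gamma_2\trianglerightv\bar\Gamma_3)$ valid when $s$ sits inside the inserted $\Gamma_2$. This identifies $(x\odot y)\odot z$ exactly with the first family of terms in $x\odot(y\odot z)$, so those cancel in the associator.

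What survives is the second family,
\[
x\odot(y\odot z)-(x\odot y)\odot z=\sum_{\substack{s,v\in\Cal V(\Gamma_3-\gamma_3)\;,\; s\neq v\\ v=\mop{res}\Gamma_2\;,\; s=\mop{res}\Gamma_1}}\big(\bar\Gamma_1\trianglerights[\bar\Gamma_2\trianglerightv\bar\Gamma_3]\,,\,\bar\gamma_1\bar\gamma_2\bar\gamma_3\big),
\]
a sum over ordered pairs of distinct vertices of $\Gamma_3$ lying outside $\gamma_3$. Since $s\neq v$, the two insertions occur at disjoint locations of $\Gamma_3$ and therefore commute, $\bar\Gamma_1\trianglerights[\bar\Gamma_2\trianglerightv\bar\Gamma_3]=\bar\Gamma_2\trianglerightv[\bar\Gamma_1\trianglerights\bar\Gamma_3]$; combined with $\bar\gamma_1\bar\gamma_2\bar\gamma_3=\bar\gamma_2\bar\gamma_1\bar\gamma_3$, the whole expression is invariant under the exchange $(\bar\Gamma_1,\bar\gamma_1,s)\leftrightarrow(\bar\Gamma_2,\bar\gamma_2,v)$. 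Thus swapping $x$ and $y$ leaves the associator fixed, which is exactly the left pre-Lie identity.

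The steps requiring real care — and which I expect to be the main obstacle — are the structural lemmas on the first component rather than the algebra: that $\mop{res}$ is unchanged under insertion, that insertions at distinct vertices commute, and, above all, that the subgraph constraint is preserved, i.e. that inserting $\bar\Gamma_1$ at a vertex $v\notin\gamma_2$ genuinely yields an element of $\wt{\Cal F}_{\Cal T}$ with covering subgraph $\bar\gamma_1\bar\gamma_2$ and contraction still lying in $\wt{V}_{\Cal T}$. I would establish these first, directly from the definitions of covering subgraph and contraction in Section~2, after which the associator computation becomes a line-by-line transcription of the previous theorem's proof with the extra vertex-membership conditions carried along.
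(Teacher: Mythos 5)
Your proposal is correct and follows essentially the same route as the paper's own proof: both expand the associator, split the admissible insertion vertices of $\Gamma_2 \trianglerightv \Gamma_3$ into those coming from the inserted copy of $\Gamma_2$ and those coming from $\Gamma_3$ distinct from $v$, cancel the first family against $\big((\bar\Gamma_1,\bar\gamma_1)\odot(\bar\Gamma_2,\bar\gamma_2)\big)\odot(\bar\Gamma_3,\bar\gamma_3)$ via residue preservation and the nested-insertion identity, and conclude from the symmetry of the surviving double sum over distinct vertices of $\Gamma_3-\gamma_3$. The structural facts you single out (residue unchanged under insertion, commutation of insertions at distinct vertices, closure of $\odot$ on $\wt{\Cal F}_{\Cal T}$) are exactly the ones the paper uses implicitly, so making them explicit only strengthens the argument.
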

\begin{proof} 
Let $(\bar \Gamma_1 , \bar \gamma_1), (\bar \Gamma_2 , \bar \gamma_2)$ and $(\bar \Gamma_3 , \bar \gamma_3)$ be three elements of $\wt {\Cal F}_{\Cal T}$, we have:
 \begin{eqnarray*}
(\bar \Gamma_1 , \bar \gamma_1) \odot \big[(\bar \Gamma_2 , \bar \gamma_2) \odot(\bar \Gamma_3 , \bar \gamma_3)\big]  &=& 
(\bar \Gamma_1 , \bar \gamma_1) \odot \big(\sum_{\substack{v \in \Cal V(\Gamma_3 - \gamma_3)\; , v = \smop{res} \Gamma_2}}  (\bar\Gamma_2 \trianglerightv \bar\Gamma_3 , \bar\gamma_2  \bar\gamma_3)\big)\\
&=& \sum_{\substack{v \in \Cal V(\Gamma_3 - \gamma_3)\; r \in \Cal V(\Gamma_2 \trianglerightv \Gamma_3 -\bar\gamma_2 \bar\gamma_3)  \\ v = \smop{res} \Gamma_2 \;,\;r = \smop{res} \Gamma_1}}  \big( \bar \Gamma_1 \trianglerightr (\bar\Gamma_2 \trianglerightv \bar\Gamma_3) , \bar\gamma_1 \bar\gamma_2  \bar\gamma_3\big)\\
&=& \sum_{\substack{r\;,\;v \in \Cal V(\Gamma_3 - \gamma_3)\; r \neq v  \\ v = \smop{res} \Gamma_2 \;,\;r = \smop{res} \Gamma_1}}  \big( \bar \Gamma_1 \trianglerightr (\bar\Gamma_2 \trianglerightv \bar\Gamma_3) , \bar\gamma_1 \bar\gamma_2  \bar\gamma_3\big)\\
&& \;+\;   \sum_{\substack{v \in \Cal V(\Gamma_3 - \gamma_3)\; r \in \Cal V(\Gamma_2 -\bar\gamma_2)  \\ v = \smop{res} \Gamma_2 \;,\;r = \smop{res} \Gamma_1}}  \big( \bar \Gamma_1 \trianglerightr (\bar\Gamma_2 \trianglerightv \bar\Gamma_3) , \bar\gamma_1 \bar\gamma_2  \bar\gamma_3\big).
\end{eqnarray*}
On the other hand we have: 
 \begin{eqnarray*}
\big[(\bar \Gamma_1 , \bar \gamma_1) \odot (\bar \Gamma_2 , \bar \gamma_2) \big]\odot(\bar \Gamma_3 , \bar \gamma_3)  &=& \sum_{\substack{r \in \Cal V(\Gamma_2 - \gamma_2)\; , r = \smop{res} \Gamma_1}} (\bar\Gamma_1 \trianglerightr \bar\Gamma_2 , \bar\gamma_1  \bar\gamma_2)  \odot(\bar \Gamma_3 , \bar \gamma_3)\\
&=& \sum_{\substack{v \in \Cal V(\Gamma_3 - \gamma_3)\; r \in \Cal V(\Gamma_2 -\bar\gamma_2 )  \\ v = \smop{res} \bar\Gamma_1 \trianglerightr \bar\Gamma_2 \;,\;r = \smop{res} \Gamma_1}} \big( (\bar\Gamma_1 \trianglerightr \bar\Gamma_2) \trianglerightv \bar\Gamma_3 , \bar\gamma_1  \bar\gamma_2  \bar\gamma_2 \big) \\
&=& \sum_{\substack{v \in \Cal V(\Gamma_3 - \gamma_3)\; r \in \Cal V(\Gamma_2 -\bar\gamma_2 )  \\ v = \smop{res} \bar\Gamma_2 \;,\;r = \smop{res} \Gamma_1}} \big( (\bar\Gamma_1 \trianglerightr \bar\Gamma_2) \trianglerightv \bar\Gamma_3 , \bar\gamma_1  \bar\gamma_2   \bar\gamma_3 \big).
\end{eqnarray*}
Then we have:
\begin{eqnarray*}
(\bar \Gamma_1 , \bar \gamma_1) \odot \big[(\bar \Gamma_2 , \bar \gamma_2) \odot(\bar \Gamma_3 , \bar \gamma_3)\big] &-& \big[(\bar \Gamma_1 , \bar \gamma_1) \odot (\bar \Gamma_2 , \bar \gamma_2) \big]\odot(\bar \Gamma_3 , \bar \gamma_3)\\
  &=& \sum_{\substack{r\;,\;v \in \Cal V(\Gamma_3 - \gamma_3)\; r \neq v  \\ v = \smop{res} \Gamma_2 \;,\;r = \smop{res} \Gamma_1}}  \big( \bar \Gamma_1 \trianglerightr (\bar\Gamma_2 \trianglerightv \bar\Gamma_3) , \gamma_1\cup \bar\gamma_2   \bar\gamma_3\big)\\
&& \;+\;  \sum_{\substack{v \in \Cal V(\Gamma_3 - \gamma_3)\; r \in \Cal V(\Gamma_2 -\bar\gamma_2)  \\ v = \smop{res} \Gamma_2 \;,\;r = \smop{res} \Gamma_1}}  \big( \bar \Gamma_1 \trianglerightr (\bar\Gamma_2 \trianglerightv \bar\Gamma_3) , \gamma_1\cup \bar\gamma_2   \bar\gamma_3\big)\\
	&& \;-\;  \sum_{\substack{v \in \Cal V(\Gamma_3 - \gamma_3)\; r \in \Cal V(\Gamma_2 -\bar\gamma_2 )  \\ v = \smop{res} \bar\Gamma_2 \;,\;r = \smop{res} \Gamma_1}} \big( (\bar\Gamma_1 \trianglerightr \bar\Gamma_2) \trianglerightv \bar\Gamma_3 , \bar\gamma_1  \bar\gamma_2   \bar\gamma_3 \big)\\
	&=& \sum_{\substack{r\;,\;v \in \Cal V(\Gamma_3 - \gamma_3)\; r \neq v  \\ v = \smop{res} \Gamma_2 \;,\;r = \smop{res} \Gamma_1}}  \big( \bar \Gamma_1 \trianglerightr (\bar\Gamma_2 \trianglerightv \bar\Gamma_3) , \gamma_1 \bar\gamma_2   \bar\gamma_3\big)\\
	&=&\\ (\bar \Gamma_2 , \bar \gamma_2) \odot \big[(\bar \Gamma_1 , \bar \gamma_1) \odot(\bar \Gamma_3 , \bar \gamma_3)\big] &-& \big[(\bar \Gamma_2 , \bar \gamma_2) \odot (\bar \Gamma_1 , \bar \gamma_1) \big]\odot(\bar \Gamma_3 , \bar \gamma_3).
\end{eqnarray*}
Consequently, $\odot$ is pre-Lie. 
\end{proof}
\vspace{0.2cm}
\subsection{Pre-Lie module}
\begin{definition}
Let $(\Cal A, \circ)$ be pre-Lie algebra. A left $\Cal A$-module is a Vector space $\Cal M$ provided with a bilinear law noted $\succ : \Cal A \otimes \Cal M \longrightarrow \Cal M$ such that for all $a, b \in \Cal A$ and $m \in \Cal M$, we have:
\begin{equation}
   a \succ (b \succ m) - (a \circ b)\succ m = b \succ (a \succ m) - (b \circ a) \succ m. 
\end{equation}
\end{definition}
\begin{definition} 
Let $\bar \Gamma_1 \in \wt{V}_{\Cal T}$ and $(\bar \Gamma_2, \bar \gamma_2) \in \wt{\Cal F}_{\Cal T}$, we define the map $\rightarrow$ by:
\begin{equation}
\bar \Gamma_1 \rightarrow (\bar \Gamma_2, \bar \gamma_2) := \sum_{\substack{v \in \Cal V(\gamma_2)\; , v = \smop{res} \Gamma_1}}  (\bar\Gamma_1 \trianglerightv \bar\Gamma_2 , \bar\Gamma_1 \trianglerightv \bar\gamma_2). 
\end{equation} 
\end{definition}
\begin{theorem} Equiped by $\rightarrow$, the space $\wt{\Cal F}_{\Cal T}$ is a pre-Lie module on  $\wt{V}_{\Cal T}$. In other words for any $\bar \Gamma_1, \bar \Gamma_2 \in \wt {V}_{\Cal T}$ and $(\bar \Gamma_3 , \bar \gamma_3) \in \wt {\Cal F}_{\Cal T}$, we have:
$$
\bar \Gamma_1 \rightarrow \Big[\bar \Gamma_2 \rightarrow(\bar \Gamma_3 , \bar \gamma_3)\Big] - (\bar \Gamma_1  \triangleright \bar \Gamma_2) \rightarrow(\bar \Gamma_3 , \bar \gamma_3) = \bar \Gamma_2 \rightarrow \Big[\bar \Gamma_1 \rightarrow(\bar \Gamma_3 , \bar \gamma_3)\Big] - (\bar \Gamma_2  \triangleright \bar \Gamma_1) \rightarrow(\bar \Gamma_3 , \bar \gamma_3).$$ 
\end{theorem}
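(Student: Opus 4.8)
The plan is to follow the same strategy used in the proofs that $(\wt{V}_{\Cal T}, \triangleright)$ and $(\wt{\Cal F}_{\Cal T}, \odot)$ are pre-Lie: I will compute the left-hand \emph{module associator}
$$
A\big(\bar\Gamma_1, \bar\Gamma_2, (\bar\Gamma_3, \bar\gamma_3)\big) := \bar\Gamma_1 \rightarrow \big[\bar\Gamma_2 \rightarrow (\bar\Gamma_3, \bar\gamma_3)\big] - (\bar\Gamma_1 \triangleright \bar\Gamma_2) \rightarrow (\bar\Gamma_3, \bar\gamma_3)
$$
and show that it is invariant under the exchange $\bar\Gamma_1 \leftrightarrow \bar\Gamma_2$, which is precisely the asserted identity. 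First I would expand the nested term by applying the definition of $\rightarrow$ twice: the outer step inserts $\bar\Gamma_2$ at a vertex $v \in \Cal V(\gamma_3)$ with $v = \smop{res}\Gamma_2$, and the inner step inserts $\bar\Gamma_1$ at a vertex $r$ of the new second component $\bar\Gamma_2 \trianglerightv \bar\gamma_3$ with $r = \smop{res}\Gamma_1$. The key structural observation is that $\Cal V(\Gamma_2 \trianglerightv \gamma_3)$ splits as the vertices coming from the inserted copy of $\Gamma_2$ together with the vertices of $\gamma_3$ distinct from $v$; I would split the inner sum accordingly into the part with $r \in \Cal V(\Gamma_2)$ and the part with $r \in \Cal V(\gamma_3)$, $r \neq v$.

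Next I would expand the subtracted term. Writing $\bar\Gamma_1 \triangleright \bar\Gamma_2 = \sum_{s \in \Cal V(\Gamma_2),\, s = \smop{res}\Gamma_1} \bar\Gamma_1 \trianglerights \bar\Gamma_2$ and using the residue stability $\mop{res}(\Gamma_1 \trianglerights \Gamma_2) = \mop{res}\Gamma_2$, the outer insertion of the result still ranges over $v \in \Cal V(\gamma_3)$ with $v = \smop{res}\Gamma_2$. The nesting relation for insertions, namely $\bar\Gamma_1 \trianglerights (\bar\Gamma_2 \trianglerightv \bar X) = (\bar\Gamma_1 \trianglerights \bar\Gamma_2) \trianglerightv \bar X$ for $s \in \Cal V(\Gamma_2)$, applied simultaneously to $\bar X = \bar\Gamma_3$ and $\bar X = \bar\gamma_3$, identifies this term exactly with the $r \in \Cal V(\Gamma_2)$ part of the first term. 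Hence these contributions cancel in the associator.

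What remains is
$$
A = \sum_{\substack{r,\, v \in \Cal V(\gamma_3),\ r \neq v \\ v = \smop{res}\Gamma_2,\ r = \smop{res}\Gamma_1}} \big(\bar\Gamma_1 \trianglerightr (\bar\Gamma_2 \trianglerightv \bar\Gamma_3),\ \bar\Gamma_1 \trianglerightr (\bar\Gamma_2 \trianglerightv \bar\gamma_3)\big).
$$
Since $r$ and $v$ are distinct vertices of $\gamma_3$ (hence of both $\Gamma_3$ and $\gamma_3$), the two insertions act independently and commute: $\bar\Gamma_1 \trianglerightr (\bar\Gamma_2 \trianglerightv \bar X) = \bar\Gamma_2 \trianglerightv (\bar\Gamma_1 \trianglerightr \bar X)$ on each component. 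The resulting expression is then manifestly symmetric under $(\bar\Gamma_1, r) \leftrightarrow (\bar\Gamma_2, v)$, so $A(\bar\Gamma_1, \bar\Gamma_2, \cdot) = A(\bar\Gamma_2, \bar\Gamma_1, \cdot)$, which is the pre-Lie module identity.

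I expect the main obstacle to be the careful bookkeeping forced by the two-component structure: each insertion relation, both the nesting identity and the commutation of insertions at distinct vertices, must be checked to hold in parallel on the ambient graph $\Gamma$ and on the subgraph $\gamma$, and the residue conditions $r = \smop{res}\Gamma_1$ and $v = \smop{res}\Gamma_2$ must be propagated correctly through the splitting of $\Cal V(\Gamma_2 \trianglerightv \gamma_3)$. In particular, verifying $\mop{res}(\Gamma_1 \trianglerights \Gamma_2) = \mop{res}\Gamma_2$ is what guarantees that the outer residue constraint is unchanged after the inner insertion, and this is the linchpin that makes the $r \in \Cal V(\Gamma_2)$ part match the subtracted term exactly.
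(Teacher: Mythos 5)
Your proposal is correct and takes essentially the same route as the paper's own proof: expand the nested action, split the inner insertion vertices of $\Gamma_2 \triangleright_v \gamma_3$ into those coming from the inserted copy of $\Gamma_2$ and those of $\gamma_3$ distinct from $v$, cancel the $\Gamma_2$-part against $(\bar\Gamma_1 \triangleright \bar\Gamma_2) \rightarrow (\bar\Gamma_3,\bar\gamma_3)$ using residue stability, and observe that the surviving double sum over distinct vertices of $\gamma_3$ is symmetric in $\bar\Gamma_1$ and $\bar\Gamma_2$. The only difference is cosmetic: you state explicitly the nesting identity, the residue-stability property, and the commutation of insertions at distinct vertices, which the paper invokes tacitly when it cancels terms and declares the remainder symmetric.
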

\begin{proof} 
Let $\bar \Gamma_1, \bar \Gamma_2$ be two elements of $\wt {V}_{\Cal T}$ and $(\bar \Gamma_3 , \bar \gamma_3)$ an element of $\wt {\Cal F}_{\Cal T}$, we have:
 \begin{eqnarray*}
\bar \Gamma_1 \rightarrow \Big[\bar \Gamma_2 \rightarrow(\bar \Gamma_3 , \bar \gamma_3)\Big]  &=& 
\bar \Gamma_1 \rightarrow \Big[\sum_{\substack{s \in \Cal V(\gamma_3)\; , v = \smop{res} \Gamma_2}}  (\bar\Gamma_2 \trianglerights \bar\Gamma_3 , \bar\Gamma_2 \trianglerights \bar\gamma_3)\Big]\\
&=& \sum_{\substack{s \in \Cal V(\gamma_3)\; l \in \Cal V(\Gamma_2 \trianglerights \gamma_3)  \\ s = \smop{res} \Gamma_2 \;,\;l = \smop{res} \Gamma_1}}  [ \bar \Gamma_1 \trianglerightl (\bar\Gamma_2 \trianglerights \bar\Gamma_3) , \bar \Gamma_1 \trianglerightl (\bar\Gamma_2 \trianglerights \bar\gamma_3)]\\
&=& \sum_{\substack{l\;, \; s \in \Cal V(\gamma_3)\; v \neq s  \\ s = \smop{res} \Gamma_2 \;,\;l = \smop{res} \Gamma_1}}  [ \bar \Gamma_1 \trianglerightl (\bar\Gamma_2 \trianglerights \bar\Gamma_3) , \bar \Gamma_1 \trianglerightl (\bar\Gamma_2 \trianglerights \bar\gamma_3)]   \\
&& \;+\; \sum_{\substack{l \in \Cal V(\Gamma_2)\;, \; s \in \Cal V(\gamma_3)\\ s = \smop{res} \Gamma_2 \;,\;l = \smop{res} \Gamma_1}}  [ \bar \Gamma_1 \trianglerightl (\bar\Gamma_2 \trianglerights \bar\Gamma_3) , \bar \Gamma_1 \trianglerightl (\bar\Gamma_2 \trianglerights \bar\gamma_3)].
\end{eqnarray*}
On the other hand we have:
 \begin{eqnarray*}
(\bar \Gamma_1  \triangleright \bar \Gamma_2) \rightarrow(\bar \Gamma_3 , \bar \gamma_3)  &=& 
\sum_{\substack{l \in \Cal V(\gamma_2)\; , l = \smop{res} \Gamma_1}}\bar\Gamma_1 \trianglerightl \bar\Gamma_2 \rightarrow(\bar \Gamma_3 , \bar \gamma_3)\\
&=& 
\sum_{\substack{l \in \Cal V(\Gamma_2)\; s \in \Cal V(\gamma_3)  \\ l = \smop{res} \Gamma_1 \;,\;s = \smop{res} \bar\Gamma_1 \trianglerightl \bar\Gamma_2}} \big[(\bar\Gamma_1 \trianglerightl \bar\Gamma_2)\trianglerights \bar\Gamma_3   , (\bar\Gamma_1 \trianglerightl \bar\Gamma_2) \trianglerights \bar\gamma_3\big]\\
&=& 
\sum_{\substack{l \in \Cal V(\Gamma_2)\; s \in \Cal V(\gamma_3)  \\ l = \smop{res} \Gamma_1 \;,\;s = \smop{res}\bar\Gamma_2}} \big[(\bar\Gamma_1 \trianglerightl \bar\Gamma_2)\trianglerights \bar\Gamma_3   , (\bar\Gamma_1 \trianglerightl \bar\Gamma_2) \trianglerights \bar\gamma_3\big].
\end{eqnarray*}
Then, we have:
\begin{eqnarray*}
\bar \Gamma_1  \rightarrow \Big[\bar \Gamma_2  \rightarrow(\bar \Gamma_3 , \bar \gamma_3)\Big] &-& \Big[\bar \Gamma_1  \triangleright\bar \Gamma_2 \Big] \rightarrow(\bar \Gamma_3 , \bar \gamma_3)\\ &=& \sum_{\substack{l\;, \; s \in \Cal V(\gamma_3)\; v \neq s  \\ s = \smop{res} \Gamma_2 \;,\;l = \smop{res} \Gamma_1}}  [ \bar \Gamma_1 \trianglerightl (\bar\Gamma_2 \trianglerights \bar\Gamma_3) , \bar \Gamma_1 \trianglerightl (\bar\Gamma_2 \trianglerights \bar\gamma_3)]   \\
&& \;+\;\sum_{\substack{l \in \Cal V(\Gamma_2)\;, \; s \in \Cal V(\gamma_3)\\ s = \smop{res} \Gamma_2 \;,\;l = \smop{res} \Gamma_1}}  [ \bar \Gamma_1 \trianglerightl (\bar\Gamma_2 \trianglerights \bar\Gamma_3) , \bar \Gamma_1 \trianglerightl (\bar\Gamma_2 \trianglerights \bar\gamma_3)]\\
&& \;-\;\sum_{\substack{l \in \Cal V(\Gamma_2)\; s \in \Cal V(\gamma_3)  \\ l = \smop{res} \Gamma_1 \;,\;s = \smop{res}\bar\Gamma_2}} \big[(\bar\Gamma_1 \trianglerightl \bar\Gamma_2)\trianglerights \bar\Gamma_3   , (\bar\Gamma_1 \trianglerightl \bar\Gamma_2) \trianglerights \bar\gamma_3\big]\\
&=&\sum_{\substack{l\;, \; s \in \Cal V(\gamma_3)\; v \neq s  \\ s = \smop{res} \Gamma_2 \;,\;l = \smop{res} \Gamma_1}}  [ \bar \Gamma_1 \trianglerightl (\bar\Gamma_2 \trianglerights \bar\Gamma_3) , \bar \Gamma_1 \trianglerightl (\bar\Gamma_2 \trianglerights \bar\gamma_3)],
\end{eqnarray*} 
which is symmetric in $\bar\Gamma_1$ and $\bar\Gamma_2$. Therefore:
$$
\bar \Gamma_1 \rightarrow \Big[\bar \Gamma_2 \rightarrow(\bar \Gamma_3 , \bar \gamma_3)\Big] - (\bar \Gamma_1  \triangleright \bar \Gamma_2) \rightarrow(\bar \Gamma_3 , \bar \gamma_3) = \bar \Gamma_2 \rightarrow \Big[\bar \Gamma_1 \rightarrow(\bar \Gamma_3 , \bar \gamma_3)\Big] - (\bar \Gamma_2  \triangleright \bar \Gamma_1) \rightarrow(\bar \Gamma_3 , \bar \gamma_3),$$
which proves the theorem.
\end{proof}

\subsection{Relation between $\rightarrow$ and $\odot$}
In this section , we prove that there exist relations between the action $\rightarrow$ and the pre-Lie product $\odot$ defined on $\wt{\Cal F}_{\Cal T}$. 

\begin{theorem} The law $\rightarrow$ is a derivation of the algebra $(\wt{\Cal F}_{\Cal T}, \odot)$. In other words, for any $\bar \Gamma_1 \in \wt {V}_{\Cal T}$ and  $(\bar \Gamma_2, \bar \gamma_2), (\bar \Gamma_3, \bar \gamma_3) \in \wt {\Cal F}_{\Cal T}$, we have:
\begin{equation*}
\bar \Gamma_1 \rightarrow  \big((\bar \Gamma_2, \bar \gamma_2) \odot (\bar \Gamma_3, \bar \gamma_3) \big) = \big(\bar \Gamma_1 \rightarrow (\bar \Gamma_2, \bar \gamma_2) \big) \odot (\bar \Gamma_3, \bar \gamma_3) + (\bar \Gamma_2, \bar \gamma_2) \odot \big(\bar \Gamma_1\rightarrow (\bar \Gamma_3, \bar \gamma_3) \big).
\end{equation*}
\end{theorem}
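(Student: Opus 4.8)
The plan is to expand both sides of the claimed identity using the definitions of $\odot$ and $\rightarrow$ and to match the resulting sums term by term. The whole argument rests on two elementary properties of vertex insertion that already underlie the proof that $(\wt{V}_{\Cal T}, \triangleright)$ is pre-Lie: a nesting relation
\[
(\bar\Gamma_1 \trianglerights \bar\Gamma_2) \trianglerightv \bar\Gamma_3 = \bar\Gamma_1 \trianglerights (\bar\Gamma_2 \trianglerightv \bar\Gamma_3), \qquad s \in \Cal V(\Gamma_2),\ v \in \Cal V(\Gamma_3),
\]
and a disjoint-insertion relation
\[
\bar\Gamma_2 \trianglerightv (\bar\Gamma_1 \trianglerights \bar\Gamma_3) = \bar\Gamma_1 \trianglerights (\bar\Gamma_2 \trianglerightv \bar\Gamma_3), \qquad s,v \in \Cal V(\Gamma_3),\ s \neq v,
\]
together with the fact that $\mop{res}(\bar\Gamma_1 \trianglerights \bar\Gamma_2) = \mop{res}\Gamma_2$ whenever the insertion is nonzero.

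First I would expand the left-hand side. Computing $(\bar\Gamma_2,\bar\gamma_2)\odot(\bar\Gamma_3,\bar\gamma_3)=\sum_v(\bar\Gamma_2\trianglerightv\bar\Gamma_3,\bar\gamma_2\bar\gamma_3)$ with $v\in\Cal V(\Gamma_3-\gamma_3)$ and $v=\smop{res}\Gamma_2$, and then applying $\bar\Gamma_1\rightarrow(-)$, the action runs over the vertices $s$ of the subgraph component $\gamma_2\gamma_3$, i.e.\ over $\Cal V(\gamma_2)\sqcup\Cal V(\gamma_3)$. The crucial bookkeeping point is that $\odot$ leaves the subgraph component as a plain juxtaposition $\bar\gamma_2\bar\gamma_3$, whereas $\rightarrow$ does insert $\bar\Gamma_1$ into that component; since $s$ lies in exactly one factor, $\bar\Gamma_1\trianglerights(\bar\gamma_2\bar\gamma_3)$ equals $(\bar\Gamma_1\trianglerights\bar\gamma_2)\bar\gamma_3$ when $s\in\Cal V(\gamma_2)$ and $\bar\gamma_2(\bar\Gamma_1\trianglerights\bar\gamma_3)$ when $s\in\Cal V(\gamma_3)$. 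This splits the left-hand side into an ``$s\in\Cal V(\gamma_2)$'' part and an ``$s\in\Cal V(\gamma_3)$'' part.

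Next I would expand the two summands on the right. For $(\bar\Gamma_1\rightarrow(\bar\Gamma_2,\bar\gamma_2))\odot(\bar\Gamma_3,\bar\gamma_3)$ the inner action inserts $\bar\Gamma_1$ at $s\in\Cal V(\gamma_2)$ and then $\odot$ inserts the result at $v\in\Cal V(\Gamma_3-\gamma_3)$; using $\mop{res}(\bar\Gamma_1\trianglerights\bar\Gamma_2)=\mop{res}\Gamma_2$ the outer condition reads $v=\smop{res}\Gamma_2$, and the nesting relation identifies $(\bar\Gamma_1\trianglerights\bar\Gamma_2)\trianglerightv\bar\Gamma_3$ with $\bar\Gamma_1\trianglerights(\bar\Gamma_2\trianglerightv\bar\Gamma_3)$. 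This reproduces exactly the ``$s\in\Cal V(\gamma_2)$'' part of the left-hand side. For $(\bar\Gamma_2,\bar\gamma_2)\odot(\bar\Gamma_1\rightarrow(\bar\Gamma_3,\bar\gamma_3))$ the inner action inserts $\bar\Gamma_1$ at $s\in\Cal V(\gamma_3)$ in both components; here one must check that $\Cal V\big((\Gamma_1\trianglerights\Gamma_3)-(\Gamma_1\trianglerights\gamma_3)\big)=\Cal V(\Gamma_3-\gamma_3)$, the new vertices coming from $\Gamma_1$ appearing in both the full graph and the subgraph and hence cancelling, so the outer insertion again runs over $v\in\Cal V(\Gamma_3-\gamma_3)$ with $s\neq v$. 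The disjoint-insertion relation then gives $\bar\Gamma_2\trianglerightv(\bar\Gamma_1\trianglerights\bar\Gamma_3)=\bar\Gamma_1\trianglerights(\bar\Gamma_2\trianglerightv\bar\Gamma_3)$, reproducing the ``$s\in\Cal V(\gamma_3)$'' part. Adding the two summands recovers the full left-hand side.

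I expect the main obstacle to be bookkeeping rather than conceptual: keeping the residue and membership constraints aligned across the three different orders of insertion, and in particular establishing the vertex-set identity $\Cal V\big((\Gamma_1\trianglerights\Gamma_3)-(\Gamma_1\trianglerights\gamma_3)\big)=\Cal V(\Gamma_3-\gamma_3)$ that makes the index set of the second right-hand summand coincide with that of the left-hand side. Once the nesting and disjoint-insertion relations are invoked, and once the two distinct behaviours of the subgraph component under $\odot$ and under $\rightarrow$ are correctly accounted for, the three sums match term by term and the derivation identity follows.
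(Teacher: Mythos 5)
Your proposal is correct and follows essentially the same route as the paper's own proof: expand the left-hand side via the definitions of $\odot$ and $\rightarrow$, split the sum over $s\in\Cal V(\gamma_2\gamma_3)$ into the parts $s\in\Cal V(\gamma_2)$ and $s\in\Cal V(\gamma_3)$, and identify the two pieces with the right-hand terms using the nesting and disjoint-insertion identities. Your explicit checks of the residue invariance $\mop{res}(\bar\Gamma_1\trianglerights\bar\Gamma_2)=\mop{res}\Gamma_2$ and of the vertex-set identity $\Cal V\big((\Gamma_1\trianglerights\Gamma_3)-(\Gamma_1\trianglerights\gamma_3)\big)=\Cal V(\Gamma_3-\gamma_3)$ are points the paper leaves implicit, but they do not change the argument.
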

\begin{proof}
Let be $\bar \Gamma_1 \in \wt {V}_{\Cal T}$ and  $(\bar \Gamma_2, \bar \gamma_2), (\bar \Gamma_3, \bar \gamma_3)\in \wt {\Cal F}_{\Cal T}$, we have:
\begin{eqnarray*}
\bar \Gamma_1 \rightarrow  \big((\bar \Gamma_2, \bar \gamma_2) \odot (\bar \Gamma_3, \bar \gamma_3) \big) &=& 
\bar \Gamma_1 \rightarrow \big( \sum_{\substack{v \in \Cal V(\Gamma_3 - \gamma_3)\\ v = \smop{res} \bar\Gamma_2 }} (\bar\Gamma_2  \trianglerightv \bar \Gamma_3, \gamma_2 \gamma_3) \big)\\
&=&\sum_{\substack{s \in  \Cal V(\gamma_2 \gamma_3)\;\; v \in \Cal V(\Gamma_3 - \gamma_3)\\ s = \smop{res} \bar\Gamma_1 \;\; v = \smop{res} \bar\Gamma_2 }} \big( \bar\Gamma_1  \trianglerights(\bar\Gamma_2  \trianglerightv \bar \Gamma_3) , \bar\Gamma_1  \trianglerights(\gamma_2 \gamma_3) \big)\\ 
&=&\sum_{\substack{s \in  \Cal V(\gamma_2)\;\; v \in \Cal V(\Gamma_3 - \gamma_3)\\ s = \smop{res} \bar\Gamma_1 \;\; v = \smop{res} \bar\Gamma_2 }} \big( (\bar\Gamma_1  \trianglerights\bar\Gamma_2)  \trianglerightv \bar \Gamma_3 , (\bar\Gamma_1  \trianglerights\gamma_2) \gamma_3 \big)\\ 
&+&\sum_{\substack{s \in  \Cal V(\gamma_3)\;\; v \in \Cal V(\Gamma_3 - \gamma_3)\\ s = \smop{res} \bar\Gamma_1 \;\; v = \smop{res} \bar\Gamma_2 }} \big(\bar\Gamma_2  \trianglerightv (\bar\Gamma_1  \trianglerights \bar \Gamma_3) , \bar\gamma_2  (\Gamma_1\trianglerights \gamma_3) \big)\\ 
&=&
\big(\bar \Gamma_1 \rightarrow (\bar \Gamma_2, \bar \gamma_2) \big) \odot (\bar \Gamma_3, \bar \gamma_3) + (\bar \Gamma_2, \bar \gamma_2) \odot \big(\bar \Gamma_1\rightarrow (\bar \Gamma_3, \bar \gamma_3) \big).
\end{eqnarray*}
\end{proof}

\begin{theorem}
The following diagram is commutative:
\diagramme{
\xymatrix{
\wt{V}_{\Cal T} \otimes \wt{\Cal F}_{\Cal T} \ar[d]_{I\otimes P_2} \ar[rrr]^{\rightarrow} 
&&&\wt{\Cal F}_{\Cal T} \ar[d]^{P_2}\\
\wt{V}_{\Cal T}\otimes\wt{\Cal H}_{\Cal T} \ar[rrr]_{\triangleright} 
&&& \wt{\Cal H}_{\Cal T} }
}
In other words, the projection on the second component $P_2$ is a morphism of pre-Lie modules.
\end{theorem}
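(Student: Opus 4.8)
The plan is to evaluate the two composites around the square on a generator $\bar\Gamma_1\otimes(\bar\Gamma_2,\bar\gamma_2)$ of $\wt V_{\Cal T}\otimes\wt{\Cal F}_{\Cal T}$ and check that they agree term by term; because $P_2$ is linear, this suffices. Recall that $P_2(\bar\Gamma_2,\bar\gamma_2)=\bar\gamma_2$, and that the bottom arrow $\triangleright\colon\wt V_{\Cal T}\otimes\wt{\Cal H}_{\Cal T}\to\wt{\Cal H}_{\Cal T}$ is the insertion action of the pre-Lie algebra $\wt V_{\Cal T}$ on $\wt{\Cal H}_{\Cal T}=S(\wt V_{\Cal T})$: it is the pre-Lie product of the previous section on connected graphs, extended to the (possibly disconnected) specified subgraph $\bar\gamma_2$ as a derivation, so that $\bar\Gamma_1\triangleright\bar\gamma_2=\sum_{v\in\Cal V(\gamma_2)}\bar\Gamma_1\trianglerightv\bar\gamma_2$, where each $\bar\Gamma_1\trianglerightv\bar\gamma_2$ vanishes unless $v=\mop{res}\bar\Gamma_1$.

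First I would run along the top-then-right edge. Applying $\rightarrow$ and then $P_2$, and using that $P_2$ keeps only the second component, one gets
$$
P_2\bigl(\bar\Gamma_1\rightarrow(\bar\Gamma_2,\bar\gamma_2)\bigr)=\sum_{\substack{v\in\Cal V(\gamma_2)\\ v=\smop{res}\Gamma_1}}\bar\Gamma_1\trianglerightv\bar\gamma_2 .
$$
Next I would run along the left-then-bottom edge: applying $I\otimes P_2$ first replaces $(\bar\Gamma_2,\bar\gamma_2)$ by $\bar\gamma_2$, and inserting $\bar\Gamma_1$ then gives
$$
\bar\Gamma_1\triangleright P_2(\bar\Gamma_2,\bar\gamma_2)=\bar\Gamma_1\triangleright\bar\gamma_2=\sum_{\substack{v\in\Cal V(\gamma_2)\\ v=\smop{res}\Gamma_1}}\bar\Gamma_1\trianglerightv\bar\gamma_2 .
$$
The two sums are identical, so the square commutes. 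Since $\wt{\Cal F}_{\Cal T}$ is a pre-Lie $\wt V_{\Cal T}$-module via $\rightarrow$ by the preceding theorem and $\wt{\Cal H}_{\Cal T}$ is one via the insertion action $\triangleright$, the intertwining identity just established shows that $P_2$ is a morphism of pre-Lie modules.

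The whole argument rests on a single structural observation: by construction, the second component of $\bar\Gamma_1\rightarrow(\bar\Gamma_2,\bar\gamma_2)$ is exactly the insertion of $\bar\Gamma_1$ into $\bar\gamma_2$, summed over the very index set $\{v\in\Cal V(\gamma_2):v=\mop{res}\Gamma_1\}$ that defines $\bar\Gamma_1\triangleright\bar\gamma_2$. The only point demanding care — and the nearest thing to an obstacle — is to confirm that the insertion action on the possibly disconnected subgraph $\bar\gamma_2$ in the bottom row coincides with the componentwise insertions appearing in the definition of $\rightarrow$; this holds because both range over all vertices of $\gamma_2$, with the residue-matching condition already built into $\trianglerightv$, so no extra terms can intervene.
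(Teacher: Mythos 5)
Your proof is correct and follows essentially the same route as the paper's: both evaluate the two composites on a generator $\bar\Gamma_1\otimes(\bar\Gamma_2,\bar\gamma_2)$, observe that $P_2$ of $\bar\Gamma_1\rightarrow(\bar\Gamma_2,\bar\gamma_2)$ is exactly $\sum_{v\in\Cal V(\gamma_2),\,v=\smop{res}\Gamma_1}\bar\Gamma_1\trianglerightv\bar\gamma_2=\bar\Gamma_1\triangleright\bar\gamma_2$, and identify this with $\triangleright\circ(I\otimes P_2)$. Your explicit remark that $\triangleright$ must be understood as the insertion extended to the possibly disconnected subgraph $\bar\gamma_2$ is a point the paper leaves implicit, but it does not change the argument.
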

\begin{proof}
Let be $\bar \Gamma_1 \in \wt {V}_{\Cal T}$ and $(\bar \Gamma_2, \bar \gamma_2) \in \wt {\Cal F}_{\Cal T}$, we have:
\begin{eqnarray*}
P_2 \big(\bar \Gamma_1 \rightarrow (\bar \Gamma_2, \bar \gamma_2)\big) &=& P_2 \big(\sum_{\substack{v \in \Cal V(\gamma_2)\\ v = \smop{res} \bar\gamma_1 }} (\bar\Gamma_1  \trianglerightv \bar \Gamma_2, \bar\Gamma_1 \trianglerightv  \gamma_2) \big)\\
&=&  \sum_{\substack{v \in \Cal V(\gamma_2)\\ v = \smop{res} \bar\Gamma_1 }}  \bar\Gamma_1  \trianglerightv \bar \gamma_2\\
&=&  \bar\Gamma_1 \triangleright  \bar\gamma_2 \\
&=&  \bar\Gamma_1 \triangleright  P_2 (\bar\Gamma_2, \bar\gamma_2)\\
&=&  (I \otimes P_2) \big(\bar\Gamma_1 \otimes(\bar\Gamma_2, \bar\gamma_2)\big),
\end{eqnarray*}
which proves the theorem.
\end{proof}
\section{Enveloping algebra of pre-Lie algebra}
In this section We use the method of  Oudom and Guin  \cite{og} to describe the enveloping algebra of the pre-Lie algebra.
\begin{definition}\label{odam}\cite{og}
Let $(A, \triangleright)$ be a pre-Lie algebra. We consider the Hopf symmetric algebra $\Cal S (A)$ equipped with its usual coproduct $\Delta$. We extend the product $\triangleright$ to $\Cal S (A)$. Let $a, b$ and $c \in \Cal S (A)$, and $x \in A$. We put:
\begin{eqnarray*}
\un \triangleright a &=& a\\
a \triangleright \un &=& \varepsilon(a) \un\\
(x a) \triangleright b &=& x \triangleright (a \triangleright b) - (x \triangleright a) \triangleright b\\
a \triangleright (b  c) &=& \sum_{a}  (a^{(1)} \triangleright b) (a^{(2)} \triangleright c).
\end{eqnarray*}

On $\Cal S (A)$, we define a product $\star$ by:
\begin{eqnarray*}
a \star b &=& \sum_{a}  a^{(1)} (a^{(2)} \triangleright b).
\end{eqnarray*}
\end{definition}
\begin{theorem} The space $(\Cal S (A), \star, \Delta)$is a Hopf algebra which is isomorphic to the enveloping Hopf algebra $\Cal U(A_{Lie})$ of the Lie algebra $A_{Lie}$. 
\end{theorem}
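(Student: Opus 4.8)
The plan is to follow the strategy of Oudom and Guin: first make the extended product $\triangleright$ on $\Cal S(A)$ well defined, then prove two structural identities from which associativity of $\star$ and all the bialgebra axioms follow formally, and finally identify the resulting Hopf algebra with $\Cal U(A_{Lie})$ via the universal property of the enveloping algebra together with a Poincar\'e--Birkhoff--Witt comparison. First I would check that the four rules of Definition \ref{odam} determine a single bilinear map $\triangleright : \Cal S(A)\otimes\Cal S(A)\longrightarrow\Cal S(A)$. The first two rules fix $\triangleright$ on the unit; the co-Leibniz rule $a\triangleright(bc)=\sum_{(a)}(a^{(1)}\triangleright b)(a^{(2)}\triangleright c)$ determines it on $A\otimes\Cal S(A)$ from the original pre-Lie product; and the rule $(xa)\triangleright b = x\triangleright(a\triangleright b)-(x\triangleright a)\triangleright b$ (with $x\in A$) extends it by induction on the polynomial degree of the left argument. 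The one point needing verification is that the symmetry $xa=ax$ in $\Cal S(A)$ is respected, i.e.\ that $(xa)\triangleright b$ is symmetric in the degree-one factors of the left argument; this is exactly where the left pre-Lie identity of the excerpt is invoked, at the base case of two elements of $A$.

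Next come the two lemmas that carry the whole argument. The first is that $\triangleright$ is a morphism of coalgebras, $\Delta(a\triangleright b)=\sum (a^{(1)}\triangleright b^{(1)})\otimes(a^{(2)}\triangleright b^{(2)})$. The second is the module identity $(a\star b)\triangleright c = a\triangleright(b\triangleright c)$, expressing that $\Cal S(A)$ is a left module over $(\Cal S(A),\star)$. Both are proved by induction on the degree of $a$: the inductive step uses the third rule and the co-Leibniz rule, and the base case $a\in A$ reduces precisely to the pre-Lie identity and to the definition of $\star$ on primitive elements.

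From these two lemmas the remaining structure is formal bookkeeping with Sweedler notation. For associativity of $\star$ one expands $(a\star b)\star c$ and $a\star(b\star c)$ using the definition of $\star$, computes $\Delta(a\star b)$ by the first lemma, and collapses the nested $\triangleright$ by the module identity; the two expressions then coincide. The compatibility $\Delta(a\star b)=\Delta(a)\star\Delta(b)$ follows from the coalgebra-morphism property together with cocommutativity of $\Delta$, so $(\Cal S(A),\star,\Delta)$ is a bialgebra; being connected and graded, it admits an antipode and is therefore a Hopf algebra.

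For the identification, one computes for $x,y\in A$ that $x\star y = xy + x\triangleright y$ (since $\Delta x = x\otimes\un+\un\otimes x$), whence the $\star$-commutator on $A$ is $x\star y-y\star x = x\triangleright y - y\triangleright x = [x,y]$, the bracket of $A_{Lie}$. Thus $A\hookrightarrow(\Cal S(A),\star)$ is a Lie algebra morphism and, by the universal property, extends to a Hopf algebra morphism $\Phi:\Cal U(A_{Lie})\longrightarrow(\Cal S(A),\star,\Delta)$. Filtering both sides by the standard PBW filtration, $\Phi$ induces the identity of $\Cal S(A)$ on the associated graded objects, so $\Phi$ is an isomorphism; in characteristic zero one may instead invoke the Cartier--Milnor--Moore theorem directly, the space of primitives of $(\Cal S(A),\Delta)$ being exactly $A$ with bracket $[\,\cdot\,,\,\cdot\,]$. \textbf{Main obstacle.} I expect the inductive proof of the module identity $(a\star b)\triangleright c=a\triangleright(b\triangleright c)$, together with the well-definedness of the extended $\triangleright$, to be the delicate part, since everything downstream is routine once these are in hand.
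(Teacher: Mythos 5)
Your proposal is correct and takes essentially the same approach as the paper: the paper's entire proof consists of citing Oudom and Guin \cite{og}, and your outline (well-definedness of the extended $\triangleright$ via the pre-Lie identity, the coalgebra-morphism and module identities $(a\star b)\triangleright c = a\triangleright(b\triangleright c)$, then associativity, the bialgebra compatibility, and the identification with $\Cal U(A_{Lie})$ via the universal property and a PBW filtration argument) is precisely a reconstruction of their argument. The only minor imprecision is calling $(\Cal S(A),\star)$ graded rather than filtered, which does not affect the existence of the antipode since the coalgebra is connected.
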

\begin{proof} This theorem was proved by Oudom and Guin in \cite{og}. 
\end{proof}
\subsection{Enveloping algebra of the pre-Lie algebra of specified Feynman graphs}
We consider the Hopf symmetric algebra $\wt {\Cal H'}_{\Cal T} : = \Cal S (\wt {V}_{\Cal T})$ of the pre-Lie algebra $(\wt {V}_{\Cal T}, \triangleright)$, equipped with its usual unshuffling coproduct $\Psi$. We extend the product $\triangleright$ to $\wt {\Cal H'}$ by the same method used in Defintion \ref{odam} and we define a product $\star$ on $\wt {\Cal H'}_{\Cal T}$ by:
\begin{eqnarray*}
\bar\Gamma \star \bar\Gamma' &=& \sum_{(\Gamma)}  \bar\Gamma^{(1)} (\bar\Gamma^{(2)} \triangleright \bar\Gamma').
\end{eqnarray*}
By construction, the space $(\wt {\Cal H'}_{\Cal T}, \star, \Psi)$ is a Hopf algebra. 
\begin{ex} 
\textnormal {In QED:}
\begin{eqnarray*}
(\graphecerc , 1) \triangleright \big((\qedcoiu , 0) (\graphecerccroiun , 0)\big) &=&   2 (\oudam, 0) (\graphecerccroiun , 0)\\
&+& 2(\qedcoiu , 0) (\grapheexmdel , 0).\\
\big((\graphecerc , 1) (\qeda , 0) \big) \triangleright (\prelied , 0) &=&  4(\preliekb, 0).
\end{eqnarray*}

\begin{eqnarray*}
\big((\graphecerc , 1) (\qeda , 0) \big)\star (\prelied , 0) &=&  \un \triangleright (\prelied , 0) (\graphecerc , 1) (\qeda , 0) \\
&+&  (\qeda , 0) \triangleright (\prelied , 0) (\graphecerc , 1) \\
&+& (\graphecerc , 1) \triangleright (\prelied , 0) (\qeda , 0) \\
&+& \big((\graphecerc , 1) (\qeda , 0) \big) \triangleright (\prelied , 0)\\
&=& (\prelied , 0) (\graphecerc , 1) (\qeda , 0)\\
&+&  4 (\preliekb, 0)   \\
&+& 2 (\prelz , 0) (\graphecerc , 1) + 2 (\prelieu, 0) (\qeda , 0).
\end{eqnarray*}
\end{ex}
\begin{theorem} $(\wt {\Cal H'}_{\Cal T}, m, \Psi)$ is a comodule-coalgebra on $(\wt {\Cal H}_{\Cal T},  m, \Delta)$, where $\wt {\Cal H}_{\Cal T}$ and $\wt {\Cal H'}_{\Cal T}$ are isomorphic as algebras.
\end{theorem}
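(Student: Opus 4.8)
The plan is to realize $\wt{\Cal H'}_{\Cal T}$ as a coalgebra object in the category of left $\wt{\Cal H}_{\Cal T}$-comodules. Since $\wt{\Cal H'}_{\Cal T}=\Cal S(\wt V_{\Cal T})$ carries the same juxtaposition product $m$ as $\wt{\Cal H}_{\Cal T}$, the two coincide as commutative algebras, which is the asserted algebra isomorphism. The right-sidedness recorded above, $\Delta(\wt V_{\Cal T})\subset\wt{\Cal H}_{\Cal T}\otimes\wt V_{\Cal T}$, shows that $\Delta$, read with its left leg in $\wt{\Cal H}_{\Cal T}$ and its right leg in $\wt{\Cal H'}_{\Cal T}$, defines a map
$$\rho:=\Delta:\wt{\Cal H'}_{\Cal T}\longrightarrow\wt{\Cal H}_{\Cal T}\otimes\wt{\Cal H'}_{\Cal T}.$$
First I would check that $\rho$ is a left coaction: coassociativity $(\mathrm{id}\otimes\rho)\rho=(\Delta\otimes\mathrm{id})\rho$ is literally the coassociativity of $\Delta$ (the cited theorem), and the counit axiom $(\varepsilon\otimes\mathrm{id})\rho=\mathrm{id}$ is the counit property of the bialgebra $\wt{\Cal H}_{\Cal T}$. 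This makes $\wt{\Cal H'}_{\Cal T}$ a left $\wt{\Cal H}_{\Cal T}$-comodule.

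The heart of the statement is that the coproduct $\Psi$ and the counit of $\wt{\Cal H'}_{\Cal T}$ are morphisms of comodules, i.e. that $\Psi$ is compatible with the codiagonal coaction on $\wt{\Cal H'}_{\Cal T}\otimes\wt{\Cal H'}_{\Cal T}$. Writing $\Delta(x)=\sum x_{(1)}\otimes x_{(2)}$ and $\Psi(x)=\sum x_{[1]}\otimes x_{[2]}$, the codiagonal coaction is $\rho_{C\otimes C}(x\otimes y)=\sum x_{(1)}y_{(1)}\otimes x_{(2)}\otimes y_{(2)}$, the $\wt{\Cal H}_{\Cal T}$-legs being multiplied in $\wt{\Cal H}_{\Cal T}$, and I must prove
$$(\mathrm{id}\otimes\Psi)\circ\rho=\rho_{C\otimes C}\circ\Psi.$$
My strategy is to observe that both sides are morphisms of commutative algebras from $\wt{\Cal H'}_{\Cal T}$ into $\wt{\Cal H}_{\Cal T}\otimes\wt{\Cal H'}_{\Cal T}\otimes\wt{\Cal H'}_{\Cal T}$: the map $\rho=\Delta$ is multiplicative because $\wt{\Cal H}_{\Cal T}$ is a bialgebra, $\Psi$ is multiplicative as the coproduct of the symmetric Hopf algebra $\Cal S(\wt V_{\Cal T})$, and $\rho_{C\otimes C}$ is multiplicative because $m$ is commutative. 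Hence it suffices to verify the identity on the algebra generators, namely the connected specified graphs $\bar\Gamma\in\wt V_{\Cal T}$.

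On such a generator the verification is immediate and purely structural. Being connected, $\bar\Gamma$ is primitive for the unshuffling coproduct, $\Psi(\bar\Gamma)=\bar\Gamma\otimes\un+\un\otimes\bar\Gamma$, and since contracting a connected graph again yields a connected graph, each $\bar\Gamma/\bar\gamma$ lies in $\wt V_{\Cal T}$ and is likewise primitive. Evaluating the left-hand side gives $(\mathrm{id}\otimes\Psi)\Delta(\bar\Gamma)=\sum_{\bar\gamma}\bar\gamma\otimes\big(\bar\Gamma/\bar\gamma\otimes\un+\un\otimes\bar\Gamma/\bar\gamma\big)$, while the right-hand side, obtained by applying $\rho_{C\otimes C}$ to $\bar\Gamma\otimes\un+\un\otimes\bar\Gamma$ so that the $\wt{\Cal H}_{\Cal T}$-leg $\bar\gamma$ is merely multiplied by $\un$, produces the same expression. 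The counit of $\wt{\Cal H'}_{\Cal T}$ is checked to be a comodule morphism by the same reduction together with the counit axiom of $\Delta$.

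I expect the main obstacle to be bookkeeping rather than conceptual: one must set up the codiagonal coaction carefully and confirm that both composite maps are genuine algebra homomorphisms into the commutative tensor-product algebra, since this multiplicativity is exactly what licenses the reduction to the generators $\wt V_{\Cal T}$. The indispensable combinatorial input is that the contraction of a connected graph stays connected, so that the right legs of $\Delta$ remain primitive for $\Psi$; once these points are secured, the generator computation closes the argument and shows that $(\wt{\Cal H'}_{\Cal T},m,\Psi)$ is a comodule-coalgebra over $(\wt{\Cal H}_{\Cal T},m,\Delta)$.
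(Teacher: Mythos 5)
Your proof is correct, but it takes a genuinely different route from the paper's. The diagram you verify is the same one: your codiagonal coaction $\rho_{C\otimes C}$ is exactly the paper's $(m\otimes I\otimes I)\circ\tau^{23}\circ(\Delta\otimes\Delta)$ (abbreviated there as $m^{13}\circ(\Delta\otimes\Delta)$). The paper, however, proves commutativity by a direct Sweedler-type computation on an \emph{arbitrary} (possibly disconnected) element $\bar\Gamma$, expanding both $(I\otimes\Psi)\circ\Delta(\bar\Gamma)$ and $(m^{13}\otimes I)\circ(\Delta\otimes\Delta)\circ\Psi(\bar\Gamma)$ into $\sum\bar\gamma_1\bar\gamma_2\otimes\bar\Gamma^{(1)}/\bar\gamma_1\otimes\bar\Gamma^{(2)}/\bar\gamma_2$; the combinatorial input it invokes is that a covering subgraph of a product $\Gamma^{(1)}\Gamma^{(2)}$ splits as $\gamma_1\gamma_2$ and contraction respects this splitting (``a graph and any of its contracted have the same number of connected components''). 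You instead observe that both composites are algebra morphisms out of the polynomial algebra $\Cal S(\wt V_{\Cal T})$, so it suffices to check them on the connected generators $\wt V_{\Cal T}$, where $\Psi$-primitivity together with the right-sidedness $\Delta(\wt V_{\Cal T})\subset\wt{\Cal H}_{\Cal T}\otimes\wt V_{\Cal T}$ (contraction of a connected $1PI$ graph stays connected and $1PI$) makes the verification a two-line computation. The combinatorial substance is the same---your ``contraction preserves connectedness'' is precisely the connected building block of the paper's component-counting fact---but your packaging through multiplicativity avoids the double-Sweedler bookkeeping over disconnected graphs, and you additionally treat the counit axioms, which the paper leaves implicit. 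What your reduction buys must also be paid for: the multiplicativity of $\rho_{C\otimes C}$ rests on the commutativity of $\wt{\Cal H}_{\Cal T}$ (so that $m$ is an algebra map) and on $\Delta$ being an algebra morphism when read as a map $\wt{\Cal H'}_{\Cal T}\to\wt{\Cal H}_{\Cal T}\otimes\wt{\Cal H'}_{\Cal T}$ via the algebra identification $\wt{\Cal H'}_{\Cal T}\cong\wt{\Cal H}_{\Cal T}$; you state both points, and they are exactly what makes the reduction to generators legitimate, so keep them explicit in a final write-up.
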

\begin{proof} 
It is clear that $\Delta : \wt {\Cal H'}_{\Cal T}\longrightarrow \wt {\Cal H}_{\Cal T} \otimes \wt {\Cal H'}_{\Cal T}$ is a coaction, that means that $\Delta$ is coassocitive. Let $(\Gamma , \underline{i}) \in \wt {\Cal H'}_{\Cal T}$, we have:
 \begin{eqnarray*}
(I \otimes \Delta) \circ \Delta (\bar \Gamma) &=& (I \otimes \Delta)(\sum_{\gamma \subset \Gamma} \bar\gamma \otimes \bar\Gamma/\bar\gamma)\\ 
&=&\sum_{\delta \subset \gamma \subset \Gamma} \bar\delta \otimes \bar\gamma/\bar\delta \otimes \bar\Gamma/\bar\gamma\\
&=& (\Delta\otimes I) \circ \Delta (\bar \Gamma).
\end{eqnarray*}
Second, we prove that the coproduct $\Psi$ is morphism of left  $\wt {\Cal H'}_{\Cal T}$-comodules. This amounts to the commutativity of the following diagram:
\diagramme{
\xymatrix{
 \wt {\Cal H'}_{\Cal T} \ar[d]_{\Psi} \ar[rr]^{\Delta} 
&& \wt {\Cal H}_{\Cal T} \otimes  \wt {\Cal H'}_{\Cal T} \ar[d]^{I \otimes \Psi}\\
  \wt {\Cal H'}_{\Cal T} \otimes  \wt {\Cal H'}_{\Cal T} \ar[d]_{\Delta\otimes \Delta}&& \wt {\Cal H}_{\Cal T} \otimes  \wt {\Cal H'}_{\Cal T} \otimes  \wt {\Cal H'}_{\Cal T} \\
 \wt {\Cal H}_{\Cal T} \otimes  \wt {\Cal H'}_{\Cal T} \otimes  \wt {\Cal H}_{\Cal T} \otimes  \wt {\Cal H'}_{\Cal T} \ar[rr]_{\tau^{23}} 
&&\ar[u]_{m \otimes I }  \wt {\Cal H}_{\Cal T} \otimes  \wt {\Cal H}_{\Cal T} \otimes   \wt {\Cal H'}_{\Cal T} \otimes  \wt {\Cal H'}_{\Cal T}
 }
}
We use the shorthand notation: $m^{13} := (m \otimes I)\circ \tau^{23}$.  
\begin{eqnarray*}
(I \otimes \Psi) \circ \Delta (\bar \Gamma) &=& (I \otimes \Psi)(\sum_{\gamma \subset \Gamma} \bar\gamma \otimes \bar\Gamma/\bar\gamma)\\ 
&=&\sum_{\gamma \subset \Gamma} \bar\gamma \otimes \Psi(\bar\Gamma/\bar\gamma)\\
&=&\sum_{{\gamma \subset \Gamma},{(\Gamma/\gamma)}} \bar\gamma \otimes (\bar\Gamma/ \bar\gamma)^{(1)} \otimes (\bar\Gamma/\bar\gamma)^{(2)}\\
&=& \sum_{{\gamma_1 \subset \Gamma^{(1)}},{\gamma_2 \subset \Gamma^{(2)}}} \bar\gamma_1 \bar\gamma_2 \otimes \bar\Gamma^{(1)}/ \bar\gamma_1 \otimes \bar\Gamma^{(2)}/\bar\gamma_2.
\end{eqnarray*}
In the last passage, we used the fact that a graph and any of its contracted have the same number of connected components. 
\begin{eqnarray*}
(m^{13} \otimes I) \circ (\Delta \otimes \Delta)\circ \Psi (\bar \Gamma) &=& (m^{13} \otimes I)\big( \sum_{(\Gamma)} \Delta(\bar\Gamma^{(1)}) \otimes  \Delta (\bar\Gamma^{(2)})\big) \\ 
&=& (m^{13} \otimes I) \big(\sum_{{\gamma_1 \subset \Gamma^{(1)}},{\gamma_2 \subset \Gamma^{(2)}}} \bar\gamma_1\otimes \bar\gamma_2 \otimes \bar\Gamma^{(1)}/ \bar\gamma_1 \otimes \bar\Gamma^{(2)}/\bar\gamma_2\big)\\
&=&  \sum_{{\gamma_1 \subset \Gamma^{(1)}},{\gamma_2 \subset \Gamma^{(2)}}} \bar\gamma_1 \bar\gamma_2 \otimes \bar\Gamma^{(1)}/ \bar\gamma_1 \otimes \bar\Gamma^{(2)}/\bar\gamma_2,
\end{eqnarray*}
which proves the theorem.
\end{proof}
\subsection{Enveloping algebra of doubling pre-Lie algebra of specified  graphs}
Let $\wt{\Cal F}_{\Cal T}$ be the vector space spanned by the pairs $(\bar\Gamma, \bar\gamma)$ where $\Gamma$ is an $1PI$ connected specified graph, $\bar\gamma \subset \bar\Gamma$ and $\bar\Gamma / \bar\gamma \in \wt {V}_{\Cal T}$. We use the method of  Oudom and Guin  \cite{og} to find the enveloping doubling pre-Lie algebra of specified Feynman graphs. We have $(\wt {\Cal F}_{\Cal T}, \odot)$ is a pre-Lie algebra,so we consider the Hopf symmetric algebra $\wt {\Cal D'}_{\Cal T} : = \Cal S (\wt {\Cal F}_{\Cal T})$ equipped with its usual unshuffling coproduct $\Phi$. We extend the product $\odot$ to $\wt {\Cal D'}_{\Cal T}$ by using Definition \ref{odam} and we define a product $\bigstar$ on $\wt {\Cal D'}_{\Cal T}$ by: 
\begin{eqnarray*}
(\bar\Gamma, \bar\gamma) \bigstar (\bar\Gamma', \bar\gamma') &=& \sum_{(\Gamma, \gamma)}  (\bar\Gamma, \bar\gamma)^{(1)} \big((\bar\Gamma, \bar\gamma)^{(2)} \odot (\Gamma', \gamma')\big).
\end{eqnarray*}

By construction, the space $(\wt {\Cal D'}_{\Cal T}, \bigstar, \Phi)$ is a Hopf algebra. 

\begin{theorem} $(\wt {\Cal D'}_{\Cal T}, m, \Phi)$ is a comodule-coalgebra on $(\wt {\Cal D}_{\Cal T},  m, \Delta)$, where $\wt {\Cal D}_{\Cal T}$ and $\wt {\Cal D'}_{\Cal T}$ are isomorphic as algebras.
\end{theorem}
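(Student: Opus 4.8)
The plan is to mirror, in the doubling setting, the proof of the comodule-coalgebra property already established for $(\wt{\Cal H'}_{\Cal T}, m, \Psi)$ over $(\wt{\Cal H}_{\Cal T}, m, \Delta)$. The algebra isomorphism $\wt{\Cal D}_{\Cal T} \cong \wt{\Cal D'}_{\Cal T}$ is immediate, exactly as for $\wt{\Cal H}_{\Cal T} \cong \wt{\Cal H'}_{\Cal T}$: both are the free commutative algebra on the connected pairs $(\bar\Gamma, \bar\gamma)$ with product given by juxtaposition, and only their coproducts ($\Delta$ versus $\Phi$) differ. The coaction is the doubling coproduct read as a map $\Delta : \wt{\Cal D'}_{\Cal T} \to \wt{\Cal D}_{\Cal T} \otimes \wt{\Cal D'}_{\Cal T}$, with first leg $(\bar\Gamma, \bar\delta) \in \wt{\Cal D}_{\Cal T}$ and second leg $(\bar\Gamma/\bar\delta, \bar\gamma/\bar\delta) \in \wt{\Cal D'}_{\Cal T}$.

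First I would check that $\Delta$ is a coassociative coaction, that is $(I \otimes \Delta) \circ \Delta = (\Delta \otimes I) \circ \Delta$. This is nothing but the coassociativity of the doubling coproduct, already guaranteed by the Proposition asserting that $(\wt{\Cal D}_{\Cal T}, m, \Delta, u, \varepsilon)$ is a bialgebra; expanding both sides produces the common expression $\sum_{\bar\epsilon \subseteq \bar\delta \subseteq \bar\gamma} (\bar\Gamma, \bar\epsilon) \otimes (\bar\Gamma/\bar\epsilon, \bar\delta/\bar\epsilon) \otimes (\bar\Gamma/\bar\delta, \bar\gamma/\bar\delta)$, summed under the admissibility conditions $\bar\delta/\bar\epsilon, \bar\gamma/\bar\delta \in \Cal T$.

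Second, I would prove that the unshuffling coproduct $\Phi$ is a morphism of left $\wt{\Cal D}_{\Cal T}$-comodules, which is the commutativity of the hexagonal diagram identical in shape to the one used for $\Psi$, namely $(I \otimes \Phi) \circ \Delta = (m^{13}\otimes I) \circ (\Delta \otimes \Delta) \circ \Phi$ with $m^{13} = (m \otimes I)\circ \tau^{23}$. Writing $(\bar\Gamma, \bar\gamma)$ as a juxtaposition of its connected pairs, $\Phi$ deconcatenates these pairs, so I would evaluate the left-hand side as $\sum_{\bar\delta \subseteq \bar\gamma} (\bar\Gamma, \bar\delta) \otimes \Phi(\bar\Gamma/\bar\delta, \bar\gamma/\bar\delta)$ and the right-hand side by first splitting $(\bar\Gamma, \bar\gamma)$ into $(\bar\Gamma, \bar\gamma)^{(1)} \otimes (\bar\Gamma, \bar\gamma)^{(2)}$, applying $\Delta$ to each factor, and merging the two $\wt{\Cal D}_{\Cal T}$-legs by $m$. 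Both sides should collapse to the single expression $\sum (\bar\Gamma^{(1)}, \bar\delta_1)(\bar\Gamma^{(2)}, \bar\delta_2) \otimes (\bar\Gamma^{(1)}/\bar\delta_1, \bar\gamma^{(1)}/\bar\delta_1) \otimes (\bar\Gamma^{(2)}/\bar\delta_2, \bar\gamma^{(2)}/\bar\delta_2)$, the sum running over splittings of the connected components of $(\bar\Gamma, \bar\gamma)$ into two groups and over admissible sub-pairs $\bar\delta_i \subseteq \bar\gamma^{(i)}$.

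The crux, which is the doubling analog of the remark that a graph and any of its contractions share the same number of connected components, is the compatibility of the sub-pair/contraction operation with the component deconcatenation performed by $\Phi$. Concretely I would need two facts: that a specified sub-pair $\bar\delta \subseteq \bar\gamma$ of a juxtaposition restricts uniquely to a sub-pair on each connected component, so that the $\bar\delta$ biject with pairs $(\bar\delta_1, \bar\delta_2)$ after the split; and that contracting $\bar\delta$ neither merges nor splits the connected components of $\Gamma$, so that $(\bar\Gamma/\bar\delta, \bar\gamma/\bar\delta)$ carries exactly the same component structure as $(\bar\Gamma, \bar\gamma)$ and $\Phi$ commutes with contraction. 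Granting these, the two summations are reindexed into one another and the diagram commutes. I expect this bookkeeping, namely matching the admissibility condition $\bar\gamma/\bar\delta \in \Cal T$ across the factorization and tracking the specifications through contraction, to be the only delicate point; the remainder is a transcription of the already-proven $\wt{\Cal H'}_{\Cal T}$ case.
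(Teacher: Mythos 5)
Your proposal follows the paper's own proof essentially step for step: the coaction is the doubling coproduct $\Delta:\wt{\Cal D'}_{\Cal T}\to\wt{\Cal D}_{\Cal T}\otimes\wt{\Cal D'}_{\Cal T}$, its coassociativity comes from the bialgebra property of $\wt{\Cal D}_{\Cal T}$, and the compatibility of $\Phi$ with the coaction is proved via the same hexagonal diagram $(I\otimes\Phi)\circ\Delta=(m\otimes I)\circ\tau^{23}\circ(\Delta\otimes\Delta)\circ\Phi$, resting on exactly the two facts you isolate (sub-pairs of a juxtaposition split componentwise, and contraction preserves the connected-component structure), which is the doubling analogue of the paper's remark in the $\wt{\Cal H'}_{\Cal T}$ case. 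Your write-up is, if anything, slightly more explicit about the reindexing bijection than the paper's Sweedler-notation computation, but the argument is the same.
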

\begin{proof} 
It is clear taht $\Delta : \wt {\Cal D'}_{\Cal T} \longrightarrow \wt {\Cal D}_{\Cal T} \otimes  \wt {\Cal D'}_{\Cal T}$ is a coaction, that means that $\Delta$ is coassocitive. 

Second, we prove that the coproduct $\Phi$ is morphism of left  $\wt {\Cal D'}_{\Cal T}$-comodules. This amounts to the commutativity of the following diagram:
\diagramme{
\xymatrix{
 \wt {\Cal D'}_{\Cal T} \ar[d]_{\Phi} \ar[rr]^{\Delta} 
&& \wt {\Cal D}_{\Cal T} \otimes  \wt {\Cal D'}_{\Cal T} \ar[d]^{I \otimes \Phi}\\
  \wt {\Cal D'}_{\Cal T} \otimes  \wt {\Cal D'}_{\Cal T} \ar[d]_{\Delta\otimes \Delta}&& \wt {\Cal D}_{\Cal T} \otimes  \wt {\Cal D'}_{\Cal T} \otimes  \wt {\Cal D'}_{\Cal T} \\
 \wt {\Cal D}_{\Cal T} \otimes  \wt {\Cal D'}_{\Cal T} \otimes  \wt {\Cal D}_{\Cal T} \otimes  \wt {\Cal D'}_{\Cal T} \ar[rr]_{\tau^{23}} 
&&\ar[u]_{m \otimes I}  \wt {\Cal D}_{\Cal T} \otimes  \wt {\Cal D}_{\Cal T} \otimes   \wt {\Cal D'}_{\Cal T} \otimes  \wt {\Cal D'}_{\Cal T}
 }
}
\begin{eqnarray*}
(I \otimes \Phi) \circ \Delta (\bar \Gamma, \bar \gamma) &=& (I \otimes \Phi) \left(\sum_{\substack{\bar\delta \subseteq \bar\gamma }} ( \bar \Gamma, \bar\delta )  \otimes ( \bar\Gamma / \bar\delta, \bar\gamma / \bar\delta) \right)\\ 
&=& \sum_{\substack{\bar\delta \subseteq \bar\gamma }} ( \bar \Gamma, \bar\delta )  \otimes \phi( \bar\Gamma / \bar\delta, \bar\gamma / \bar\delta)\\
&=& \sum_{\substack{(\bar\Gamma, \bar\gamma)\;,\; \bar\delta \subseteq \bar\gamma }} ( \bar \Gamma, \bar\delta )  \otimes (\bar\Gamma / \bar\delta, \bar\gamma / \bar\delta)^{(1)} \otimes (\bar\Gamma / \bar\delta, \bar\gamma / \bar\delta)^{(2)}.
\end{eqnarray*}
\begin{eqnarray*}
(m^{13} \otimes I) \circ (\Delta \otimes \Delta)\circ \Phi (\bar \Gamma, \bar\gamma) &=& (m^{13} \otimes I)\left( \sum_{(\bar\Gamma, \bar\gamma)} \Delta\big((\bar\Gamma, \bar\gamma)^{(1)}\big) \otimes  \bar\Delta \big((\bar\Gamma, \bar\gamma)^{(2)}\big)\right) \\ 
&=& (m^{13} \otimes I) \big(\sum_{(\bar\Gamma , \bar\gamma), \bar\delta \subset \bar\gamma} (\bar\Gamma, \bar\delta)^{(1)}\otimes (\bar\Gamma/\bar\delta, \bar\gamma/\bar\delta)^{(1)} \otimes (\bar\Gamma, \bar\delta)^{(2)} \otimes (\bar\Gamma/\bar\delta, \bar\gamma/\bar\delta)^{(2)} \big)\\
&=& \sum_{(\bar\Gamma , \bar\gamma), \bar\delta \subset \bar\gamma} (\bar\Gamma, \bar\delta)^{(1)} (\bar\Gamma, \bar\delta)^{(2)}\otimes (\bar\Gamma/\bar\delta, \bar\gamma/\bar\delta)^{(1)} \otimes (\bar\Gamma/\bar\delta, \bar\gamma/\bar\delta)^{(2)}\\
&=& \sum_{(\bar\Gamma , \bar\gamma), \bar\delta \subset \bar\gamma} (\bar\Gamma, \bar\delta)\otimes (\bar\Gamma/\bar\delta, \bar\gamma/\bar\delta)^{(1)} \otimes (\bar\Gamma/\bar\delta, \bar\gamma/\bar\delta)^{(2)},
\end{eqnarray*}
which proves the theorem.
\end{proof}
\subsection{Relation between $(\wt {\Cal D'}_{\Cal T}, \bigstar, \Phi)$ and $(\wt {\Cal H'}_{\Cal T}, \star, \Psi)$ }
\begin{definition}
Let $\bar\Gamma_1 \in \wt {\Cal H'}_{\Cal T}$ and $(\bar\Gamma_2 , \bar\gamma_2) \in \wt {\Cal D'}_{\Cal T}$, we define two products $\trianglerightg$ and $\starg$ by:
\begin{eqnarray}
\bar\Gamma_1 \trianglerightg \bar\Gamma_2 &:=& \sum_{\substack{v \in \Cal V(\gamma_2)\; , v = \smop{res} \Gamma_1}}  \bar\Gamma_1 \trianglerightv \bar\Gamma_2,
\end{eqnarray}
\begin{eqnarray}
\bar\Gamma_1 \starg \bar\Gamma_2 &:=& \sum_{(\Gamma_1)}  (\bar\Gamma_1)^{(1)} (\bar\Gamma_1)^{(2)}\trianglerightg \bar\Gamma_2.
\end{eqnarray}
\end{definition}
\begin{remark} We remark that $\trianglerightg$ and $\starg$ are respectively the restrictions of $\triangleright$ and $\star$ on the set of vertices of subgraph $\gamma_2$, so they are respectively pre-Lie and associative.
\end{remark}
\begin{theorem} $(\wt {\Cal D'}_{\Cal T}, \bigstar, \Phi)$ is a module-bialgebra on $(\wt {\Cal H'}_{\Cal T}, \star, \Psi)$.
\end{theorem}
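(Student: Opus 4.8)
The plan is to unfold ``module-bialgebra'' into its two defining axioms and to reduce each to the pre-Lie facts already established. Write $H=(\wt{\Cal H'}_{\Cal T},\star,\Psi)$ for the acting Hopf algebra and $B=(\wt{\Cal D'}_{\Cal T},\bigstar,\Phi)$ for the acted-on bialgebra, the action being $\starg$. With the shorthand $U=(\bar\Gamma_2,\bar\gamma_2)$, $V=(\bar\Gamma_3,\bar\gamma_3)$, the statement means that $B$ is simultaneously an $H$-module-algebra and an $H$-module-coalgebra, i.e.
\begin{eqnarray*}
\bar\Gamma_1 \starg (U\bigstar V) &=& \sum_{(\Gamma_1)}\big(\bar\Gamma_1^{(1)}\starg U\big)\bigstar\big(\bar\Gamma_1^{(2)}\starg V\big),\\
\Phi\big(\bar\Gamma_1\starg U\big) &=& \sum\big(\bar\Gamma_1^{(1)}\starg U^{(1)}\big)\otimes\big(\bar\Gamma_1^{(2)}\starg U^{(2)}\big),
\end{eqnarray*}
together with the trivial relations $\bar\Gamma_1\starg\un=\varepsilon(\bar\Gamma_1)\un$ and $\varepsilon(\bar\Gamma_1\starg U)=\varepsilon(\bar\Gamma_1)\varepsilon(U)$, where the coproducts applied to $\bar\Gamma_1$ and to $U$ are $\Psi$ and $\Phi$ respectively.

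I would first record the conceptual reason, which also guides the computation. By the pre-Lie module theorem the action $\rightarrow$ turns $\wt{\Cal F}_{\Cal T}$ into a module over $\wt{V}_{\Cal T}$; rewriting the module identity as $[\bar\Gamma_1,\bar\Gamma_2]_\triangleright\rightarrow M=\bar\Gamma_1\rightarrow(\bar\Gamma_2\rightarrow M)-\bar\Gamma_2\rightarrow(\bar\Gamma_1\rightarrow M)$ shows that $\wt{V}_{Lie}$ acts on $\wt{\Cal F}_{Lie}$ as a Lie module. The derivation theorem $\bar\Gamma_1\rightarrow(X\odot Y)=(\bar\Gamma_1\rightarrow X)\odot Y+X\odot(\bar\Gamma_1\rightarrow Y)$ shows this action is by derivations of the bracket $[\,\cdot\,,\,\cdot\,]_\odot$. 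A Lie algebra acting on a second one by derivations lifts, under the enveloping functor, to a module-bialgebra on the two enveloping algebras (the smash-product construction); since $\wt{\Cal H'}_{\Cal T}\cong\Cal U(\wt{V}_{Lie})$ and $\wt{\Cal D'}_{\Cal T}\cong\Cal U(\wt{\Cal F}_{Lie})$ by the Oudom--Guin theorem quoted above, this is precisely the asserted structure. I would take this as the backbone and then make the Oudom--Guin formulas explicit.

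The explicit verification proceeds by induction on the symmetric degree, reducing everything to generators. Since $\starg,\bigstar,\Phi,\Psi$ are all generated from their values on $\wt{V}_{\Cal T}$ and $\wt{\Cal F}_{\Cal T}$ by the recursion of Definition \ref{odam}, it is enough to treat $\bar\Gamma_1\in\wt{V}_{\Cal T}$ and $U,V\in\wt{\Cal F}_{\Cal T}$ as generators. Then $\Psi(\bar\Gamma_1)=\bar\Gamma_1\otimes\un+\un\otimes\bar\Gamma_1$ and the module-algebra identity collapses to the Leibniz rule $\bar\Gamma_1\starg(U\bigstar V)=(\bar\Gamma_1\rightarrow U)\bigstar V+U\bigstar(\bar\Gamma_1\rightarrow V)$; splitting $U\bigstar V$ into its commutative part $UV$ and its pre-Lie part $U\odot V$, the first is handled by the extension rule $\bar\Gamma_1\trianglerightg(UV)=(\bar\Gamma_1\trianglerightg U)V+U(\bar\Gamma_1\trianglerightg V)$ valid for the primitive $\bar\Gamma_1$, the second is exactly the derivation theorem, and the four terms recombine into the right-hand side. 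For the module-coalgebra identity the key point is that $\bar\Gamma_1\rightarrow U=\sum_v(\bar\Gamma_1\trianglerightv\bar\Gamma_2,\bar\Gamma_1\trianglerightv\bar\gamma_2)$ is a combination of connected objects, hence primitive for $\Phi$; together with $\bar\Gamma_1\starg\un=0$ and $\un\starg X=X$ both sides reduce to $(\bar\Gamma_1\starg U)\otimes\un+\un\otimes(\bar\Gamma_1\starg U)$. The unit and counit relations are immediate for degree reasons.

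The main obstacle is the propagation step: the derivation and module identities live at the level of the pre-Lie operations $\rightarrow,\odot,\triangleright$, whereas the theorem concerns the twisted associative products $\starg,\bigstar,\star$ of the enveloping algebras and the coproducts $\Phi,\Psi$. I would control this by showing, by induction on degree, that the extension of $\trianglerightg$ to $\Cal S(\wt{V}_{\Cal T})\otimes\Cal S(\wt{\Cal F}_{\Cal T})$ commutes with $\Psi$ and $\Phi$ in each variable, so that $\trianglerightg$ is a morphism of coalgebras; once this compatibility is secured, the module-algebra and module-coalgebra axioms for $\starg,\bigstar$ follow from those for $\trianglerightg,\odot$ by the same bookkeeping that was used to prove that $(\wt{\Cal H'}_{\Cal T},\star,\Psi)$ and $(\wt{\Cal D'}_{\Cal T},\bigstar,\Phi)$ are Hopf algebras. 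The coassociativity and cocommutativity of the unshuffling coproducts are what symmetrise the two sides and close the induction.
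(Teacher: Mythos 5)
Your proposal is correct in its mathematical core, but it proves the theorem by a genuinely different construction from the paper's. The paper never touches the pre-Lie module action $\rightarrow$ in this proof: it introduces the right action $\alpha\big((\bar\Gamma,\bar\gamma)\otimes\bar\Gamma'\big)=(\bar\Gamma\star\bar\Gamma',\bar\gamma)$, which multiplies the acting element into the \emph{first} component by the full product $\star$ and leaves the subgraph alone, and then checks the three diagrams directly: the action axiom is just associativity of $\star$; the compatibility with $\bigstar$ rests on the identity $(\bar\Gamma_1,\bar\gamma_1)\bigstar(\bar\Gamma_2,\bar\gamma_2)=(\bar\Gamma_1\star\bar\Gamma_2-\bar\Gamma_1\starg\bar\Gamma_2,\,\bar\gamma_1\bar\gamma_2)$, in which $\starg$ enters only as a correction term; and the compatibility of $\Phi$ with $\alpha$ is a short coproduct computation. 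Your route instead takes the action to be the enveloping-algebra lift of $\rightarrow$ and reduces the module-algebra and module-coalgebra axioms to the derivation theorem and the pre-Lie module theorem, i.e.\ to the smash-product principle for a Lie algebra acting on another one by derivations. What you gain: a conceptual explanation (the structure is $\Cal U$ of a semidirect product $\wt{V}_{Lie}\ltimes\wt{\Cal F}_{Lie}$) and an actual use of the two theorems of Section 4, which the paper proves but never exploits in its own proof of this statement. What the paper gains: a simpler action and a short, self-contained verification with no induction on symmetric degree. Since the statement does not pin down the action, both arguments prove it as written, but the two module-bialgebra structures obtained are different.

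Two repairs are needed before your outline is complete. First, your action is not the paper's $\starg$: in the paper, $\bar\Gamma_1\starg\bar\Gamma_2$ inserts $\bar\Gamma_1$ only at vertices of $\gamma_2$ but returns the total graph alone, with no transformation of the second component, so it is not even a map into $\wt{\Cal D'}_{\Cal T}$. The operator your computations actually use is the extension of $\rightarrow$, whose value on generators is $\sum_{v}(\bar\Gamma_1\trianglerightv\bar\Gamma_2,\,\bar\Gamma_1\trianglerightv\bar\gamma_2)$; give it its own symbol and note that it lands in $\wt{\Cal F}_{\Cal T}$ (the contracted graph is unchanged, so the pair is again a generator — this is also exactly what justifies your primitivity claim in the module-coalgebra step). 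Second, the propagation step you defer is the real content beyond degree one: you must show that the derivation-extension of $\rightarrow$ to $\Cal S(\wt{\Cal F}_{\Cal T})$ satisfies Leibniz with respect to the Oudom--Guin extension of $\odot$ from Definition \ref{odam}, not merely with respect to $\odot$ on generators, and similarly that it is a coderivation for $\Phi$. Your proposed mechanism (the extension preserves the space of generators, hence is a coderivation, and cocommutativity symmetrizes the two sides of the induction) is the right one, but as written it is a plan rather than a proof.
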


\begin{proof}
We consider the map: $\alpha: \wt {\Cal D'}_{\Cal T} \otimes \wt {\Cal H'}_{\Cal T} \longrightarrow \wt {\Cal D'}_{\Cal T}$ defined for all $(\Gamma , \gamma) \in \wt {\Cal D'}_{\Cal T}$ and $\Gamma' \in \wt {\Cal H'}_{\Cal T}$ by:
$$\alpha ((\bar\Gamma , \bar\gamma)  \otimes \bar\Gamma') = (\bar\Gamma \star \bar\Gamma', \bar\gamma).$$
To prove this theorem, first we will show that $\alpha$ is an action which results in the following commutative diagram:
\diagramme{
\xymatrix{
\wt{D'}_{\Cal T} \otimes \wt{\Cal H'}_{\Cal T} \otimes \wt{\Cal H'}_{\Cal T}  \ar[d]_{I\otimes \star} \ar[rrr]^{\alpha \otimes I} 
&&&\wt{D'}_{\Cal T} \otimes \wt{\Cal H'}_{\Cal T} \ar[d]^{\alpha}\\
\wt{D'}_{\Cal T} \otimes \wt{\Cal H'}_{\Cal T} \ar[rrr]_{\alpha} 
&&& \wt{\Cal D'}_{\Cal T} }
}
Let $(\Gamma , \gamma) \in \wt {\Cal D'}_{\Cal T}$ and $\bar\Gamma_1, \bar\Gamma_2 \in \wt {\Cal H'}_{\Cal T}$, we have:
\begin{eqnarray*}
\alpha \circ (\alpha \otimes I) [(\bar \Gamma, \bar \gamma)\otimes \bar\Gamma_1 \otimes \bar\Gamma_2]  &=& \alpha [(\bar \Gamma \star \bar\Gamma_1, \bar\gamma) \otimes \bar\Gamma_2]\\ 
&=& \big((\bar \Gamma \star \bar\Gamma_1) \star \bar\Gamma_2, \bar\gamma\big) \\
&=& \big(\bar \Gamma \star (\bar\Gamma_1 \star \bar\Gamma_2), \bar\gamma\big)\\
&=& \alpha [(\bar \Gamma , \bar\gamma) \otimes \bar\Gamma_1 \star \bar\Gamma_2]\\
&=& \alpha \circ (I \otimes \star) [(\bar \Gamma , \bar\gamma) \otimes \bar\Gamma_1 \otimes \bar\Gamma_2].
\end{eqnarray*}
Second, we show that the following diagram is commutative:
\diagramme{
\xymatrix{
 \wt {\Cal D'}_{\Cal T} \otimes  \wt {\Cal D'}_{\Cal T}  \otimes  \wt {\Cal H'}_{\Cal T} \ar[d]_{I\otimes I \otimes \Psi} \ar[rr]^{\bigstar \otimes I} 
&& \wt {\Cal D'}_{\Cal T} \otimes  \wt {\Cal H'}_{\Cal T} \ar[d]^{\alpha}\\
   \wt {\Cal D'}_{\Cal T} \otimes  \wt {\Cal D'}_{\Cal T}  \otimes  \wt {\Cal H'}_{\Cal T} \otimes  \wt {\Cal H'}_{\Cal T} \ar[d]_{\tau^{23}}&& \wt {\Cal D'}_{\Cal T} \\
\wt {\Cal D'}_{\Cal T} \otimes  \wt {\Cal H'}_{\Cal T} \otimes  \wt {\Cal D'}_{\Cal T} \otimes  \wt {\Cal H'}_{\Cal T}\ar[rr]_{\alpha \otimes \alpha} 
&&\ar[u]_{\bigstar} \wt {\Cal D'}_{\Cal T} \otimes  \wt {\Cal D'}_{\Cal T}
 }
}
We denoted by: $\Psi^{23} = \tau^{23} \circ (I\otimes I \otimes \Psi).$
\begin{eqnarray*}
\bigstar \circ (\alpha \otimes \alpha) \circ \Psi^{23}\big((\bar \Gamma_1, \bar \gamma_1)\otimes (\bar \Gamma_2, \bar \gamma_2)\otimes \bar\Gamma\big)  &=&\sum_{(\Gamma)} \alpha \big((\bar \Gamma_1 , \bar\gamma_1)\otimes \bar\Gamma^{(1)}\big) \bigstar \alpha \big((\bar \Gamma_2 , \bar\gamma_2)\otimes \bar\Gamma^{(2)}\big)\\ 
&=& \sum_{(\Gamma)} (\bar \Gamma_1 \star \bar\Gamma^{(1)}, \bar\gamma_1) \bigstar (\bar \Gamma_2 \star \bar\Gamma^{(2)}, \bar\gamma_2)\\ 
&=& \sum_{\substack{(\Gamma) \\v \in \Gamma_2-\gamma_2}} \big((\bar \Gamma_1 \star \bar\Gamma^{(1)})^{(1)} (\bar \Gamma_1 \star \bar\Gamma^{(1)})^{(2)} \trianglerightv(\bar \Gamma_2 \star \bar\Gamma^{(2)}), \bar\gamma_1\bar\gamma_2\big)\\ 
&=& \sum_{\substack{(\Gamma)}} \big((\bar \Gamma_1 \star \bar\Gamma^{(1)})\star (\bar \Gamma_2 \star \bar\Gamma^{(2)}) - (\bar \Gamma_1 \star \bar\Gamma^{(1)})\starg (\bar \Gamma_2 \star \bar\Gamma^{(2)}), \bar\gamma_1\bar\gamma_2\big)\\
&=& \big((\bar \Gamma_1 \star \bar\Gamma_2)\star\bar\Gamma - (\bar \Gamma_1 \starg \bar\Gamma_2)\star \bar\Gamma, \bar\gamma_1\bar\gamma_2\big).
\end{eqnarray*}
On the other hand:
\begin{eqnarray*}
\alpha \circ (\bigstar \otimes I) \big((\bar \Gamma_1, \bar \gamma_1)\otimes (\bar \Gamma_2, \bar \gamma_2)\otimes \bar\Gamma\big)  &=& \alpha \big((\bar \Gamma_1, \bar \gamma_1)\bigstar (\bar \Gamma_2, \bar \gamma_2)\otimes \bar\Gamma\big)\\ 
&=& \alpha \big((\bar \Gamma_1 \star \bar \Gamma_2 - \bar \Gamma_1 \star \bar \Gamma_2 , \bar \gamma_1 \bar \gamma_2)\otimes \bar\Gamma\big)\\ 
&=& \big((\bar \Gamma_1 \star \bar \Gamma_2 - \bar \Gamma_1 \starg \bar \Gamma_2) \star \Gamma, \bar \gamma_1 \bar \gamma_2\big)\\ 
&=& \big((\bar \Gamma_1 \star \bar \Gamma_2)\star \bar\Gamma - (\bar \Gamma_1 \starg \bar \Gamma_2) \star \bar\Gamma, \bar \gamma_1 \bar \gamma_2\big).
\end{eqnarray*}
Finally, we prove that the coproduct $\Phi$ is morphism of modules. This amounts to the commutativity of the following diagram:
\diagramme{
\xymatrix{
 \wt {\Cal D'}_{\Cal T} \otimes \wt {\Cal H'}_{\Cal T} \ar[d]_{I\otimes \Psi} \ar[rr]^{\alpha} 
&& \wt {\Cal D}_{\Cal T}\ar[d]^{\Phi}\\
  \wt {\Cal D'}_{\Cal T} \otimes  \wt {\Cal H'}_{\Cal T}  \otimes  \wt {\Cal H'}_{\Cal T} \ar[d]_{\Phi \otimes I \otimes I}&& \wt {\Cal D}_{\Cal T} \otimes  \wt {\Cal D'}_{\Cal T}\\
 \wt {\Cal D'}_{\Cal T} \otimes  \wt {\Cal D'}_{\Cal T} \otimes  \wt {\Cal H'}_{\Cal T} \otimes  \wt {\Cal H'}_{\Cal T} \ar[rr]_{\tau^{23}} 
&&\ar[u]_{\alpha \otimes \alpha}  \wt {\Cal D'}_{\Cal T} \otimes  \wt {\Cal H'}_{\Cal T} \otimes   \wt {\Cal D'}_{\Cal T} \otimes  \wt {\Cal H'}_{\Cal T}
 }
}
Let $(\Gamma_1 , \gamma_1) \in \wt {\Cal D'}_{\Cal T}$ and $\bar\Gamma_2 \in \wt {\Cal H'}_{\Cal T}$, we have:
\begin{eqnarray*}
\Phi \circ \alpha \big((\bar \Gamma_1, \bar \gamma_1)\otimes \bar\Gamma_2\big) &=& \Phi \big(\bar \Gamma_1 \star \bar\Gamma_2, \bar \gamma_1 \big)\\ 
&=& \sum_{\substack{(\Gamma_1)}}  \Phi  \big(\bar \Gamma^{(1)}_1 (\bar \Gamma^{(2)}_1 \triangleright \bar \Gamma_2) , \bar\gamma \big)  \\
&=& \sum_{\substack{(\bar\Gamma_1, \bar\gamma_1)\;,\; (\bar\Gamma_2)}} (\bar \Gamma^{(11)}_1 (\bar \Gamma^{(12)}_1 \triangleright \bar \Gamma^{(1)}_2) , \bar\gamma^{(1)})   \otimes (\bar \Gamma^{(21)}_1 (\bar \Gamma^{(22)}_1 \triangleright \bar \Gamma^{(2)}_2) , \bar\gamma^{(2)})\\
&=& \sum_{\substack{(\bar\Gamma_1, \bar\gamma_1)\;,\; (\bar\Gamma_2)}} (\bar \Gamma^{(1)}_1 \star  \bar \Gamma^{(1)}_2 , \bar\gamma^{(1)})   \otimes (\bar \Gamma^{(2)}_1 \star \bar \Gamma^{(2)}_2) , \bar\gamma^{(2)}).
\end{eqnarray*}
We use the notation: $(\Phi \otimes I \otimes I) \circ (I \otimes \Psi) = \Phi \otimes \Psi$. 
\begin{eqnarray*}
(\alpha\otimes \alpha) \circ \tau^{23}\circ (\Phi \otimes \Psi) \big((\bar \Gamma_1, \bar \gamma_1)\otimes \bar\Gamma_2\big)
&=& (\alpha\otimes \alpha) \big(\sum_{\substack{(\bar\Gamma_1, \bar\gamma_1)\;,\; (\bar\Gamma_2)}} (\bar\Gamma^{(1)}_1, \bar\gamma^{(1)}_1)\otimes  \bar\Gamma^{(1)}_2 \otimes (\bar\Gamma^{(2)}_1, \bar\gamma^{(2)}_1) \otimes  \bar\Gamma^{(2)}_2\big)\\ 
 &=& \sum_{\substack{(\bar\Gamma_1, \bar\gamma_1)\;,\; (\bar\Gamma_2)}} \alpha\big((\bar\Gamma^{(1)}_1, \bar\gamma^{(1)}_1)\otimes  \bar\Gamma^{(1)}_2 \big)\otimes \alpha\big((\bar\Gamma^{(2)}_1, \bar\gamma^{(2)}_1) \otimes  \bar\Gamma^{(2)}_2)\big)\\ 
&=& \sum_{\substack{(\bar\Gamma_1, \bar\gamma_1)\;,\; (\bar\Gamma_2)}} (\bar \Gamma^{(1)}_1 \star  \bar \Gamma^{(1)}_2 , \bar\gamma^{(1)})   \otimes (\bar \Gamma^{(2)}_1 \star \bar \Gamma^{(2)}_2) , \bar\gamma^{(2)}),
\end{eqnarray*}
which proves the theorem.
\end{proof}


\end{document}